\documentclass[reqno]{amsart}
\usepackage{graphicx, amsmath, parskip, geometry}
\usepackage[english]{babel}
\usepackage{blindtext, layouts, enumitem}
\usepackage{eufrak, amssymb}
\usepackage{array, tabularx, multirow, longtable, xcolor}
\usepackage[export]{adjustbox}
\usepackage{subcaption, wrapfig}
\usepackage[pdftex]{pict2e}
\usepackage{tikz-cd}

\usepackage{centernot}

\usepackage{quiver}

\usepackage{hyperref}
\usepackage{color}

\newcommand{\todo}[1]{{\color{red}#1}}

\newcommand{\A}{\mathcal{A}}

\newcommand{\N}{\mathcal{N}}

\newcommand{\Nnat}{\mathbb{N}}
\newcommand{\Zint}{\mathbb{Z}}

\newcommand{\Cpx}{\mathbb{C}}

\newcommand{\bbK}{\mathbb{K}}

\DeclareMathOperator{\ext}{Ext}
\DeclareMathOperator{\gk}{GKdim}

\numberwithin{equation}{section}

\theoremstyle{plain}
\newtheorem{thm}[equation]{Theorem}
\newtheorem{cor}[equation]{Corollary}
\newtheorem{lem}[equation]{Lemma}
\newtheorem{prop}[equation]{Proposition}
\newtheorem{question}[equation]{Question}

\theoremstyle{definition}
\newtheorem{de}[equation]{Definition}

\newtheorem{rem}[equation]{Remark}

\newtheorem{conj}{Conjecture}

\title{$N$-Koszul algebras of finite global dimension for $N\geq 3$}
\author{So Nakamura}
\email{sonakamura@unr.edu}
\address{Department of Mathematics and Statistics\\ University of Nevada, Reno \\
105B Orvis Building\\ Reno, NV 89512 \\ USA}

\date{\today}

\begin{document}
\keywords{$N$-Koszul algebra, Artin-Schelter regular algebra, noncommutative algebraic geometry}
\maketitle

\begin{abstract}
    Let $N\geq 3$. The class of $N$-Koszul AS regular algebras, or more generally, that of $N$-Koszul AS Gorenstein algebras, has attracted much attention from algebraists. Nevertheless, there have been no known examples of $N$-Koszul AS regular algebras of finite global dimension other than the ones of global dimension $3$. A recent work by Kabbaj showed that, such an $N$-Koszul algebra $A$ of finite global dimension has to have a large global dimension and that $N$ has to be prime, under
    the assumptions that $(1)$ $A$ has a Hilbert series of weighted polynomial rings and that $(2)$ the trivial $A$-module $\mathbb{K}$ has a finite free resolution. All AS regular algebras satisfy the latter assumption and are expected to do the former as well. In this paper, we prove that such an $N$-Koszul algebra $A$ must be one of the known $3$-Koszul AS regular algebras of global dimension $3$ if the order of the pole of its Hilbert series $h_A(t)$ at $t=1$ is greater than $\frac{21d+1}{22}$, where $d$ is the global dimension of $A$. As a corollary, we prove that any $N$-Koszul AS regular algebra $A$ must be one of the known $3$-Koszul AS regular algebras of global dimension $3$ if $A$ has a Hilbert series of weighted polynomial rings and if the GK dimension of $A$ coincides with the global dimension of $A$, both of which have been conjectured to hold for any AS regular algebras. 
\end{abstract}

\setcounter{tocdepth}{2}
\makeatletter
\def\l@subsection{\@tocline{2}{0pt}{2.5pc}{5pc}{}}
\def\l@subsubsection{\@tocline{2}{0pt}{5pc}{7.5pc}{}} 
\makeatother
\tableofcontents

\setcounter{section}{-1}

\section{Introduction}
A polynomial ring over a field is a fundamental ingredient for algebraic geometry. Their grading structure is crucial for the construction of projective varieties. In the area of noncommutative algebraic geometry, which seeks algebro-geometric phenomena in noncommutative ring theory, it is natural to ask for noncommutative analogs of polynomial rings -- especially from the perspective of homological algebra. One might wonder if the noncommutative polynomial rings serve as their noncommutative generalization from this viewpoint. However, they are known to lack nice properties: if they have multiple variables, they are not Noetherian, and they all have global dimension $1$ (this follows from \cite[Corollary 2.5]{Bergman}, for example), unlike the commutative case. One of the classes of rings that behave homologically nicely is that of \emph{Artin-Schelter(AS) regular algebras} introduced in \cite{ArtinSchelter}:

\begin{de}\cite{RogalskiAS}
    Let $A=\bbK\oplus A_1\oplus A_2\oplus\cdots$ be a finitely generated graded algebra over $\bbK$. The algebra $A$ is called \emph{Artin-Schelter(AS) regular} if it has the following properties:
    \begin{enumerate}
        \item $A$ has finite global dimension $d$, 
        \item $A$ has finite GK-dimension, i.e., there exist $c, d>0$ such that $\dim_\bbK A_n\leq cn^d$ for all $n\geq1$.
        \item $A$ is Gorenstein, meaning that
        $$\ext^q_A(k, A)\simeq\left\{\begin{array}{llll}
        0 & \text{if}\ q\neq d\\
        \bbK & \text{if}\ q=d.
        \end{array}\right.$$
    \end{enumerate}
\end{de}

The above definition is indeed equivalent to the original definition in \cite{ArtinSchelter}, see \cite[Remark 2.13]{RogalskiNPC}.
The authors of the paper \cite{ArtinSchelter} attempted to classify AS regular algebras of global dimension 3 by using algebraic geometry. The full classification of those algebras was finally completed by Artin, Tate, and Van den Bergh in \cite{ArtinTateVandenBerghelliptic}. Motivated by the work in \cite{ArtinSchelter}, Berger in \cite{Berger} introduced the notion of \emph{N-Koszul algebras} for $N\geq 2$ based on the two types of the minimal projective resolution of the trivial left module $\bbK$ obtained in \cite{ArtinSchelter}: 

\begin{de}[$N$-Koszul algebras]\cite[Definition 2.10]{Berger}
    Let $N\geq2$ and $A=\bbK\oplus A_1\oplus A_2\oplus\cdots$ be a locally finite graded algebra generated in degree 1 over $\bbK$. The algebra $A$ is called \emph{$N$-Koszul} if the trivial left $A$-module $\bbK$ has a minimal projective resolution of the form
\[\begin{tikzcd}[column sep=small]
	\cdots & {P^i} & \cdots & {P^1} & {P^0} & k & 0
	\arrow[from=1-1, to=1-2]
	\arrow[from=1-2, to=1-3]
	\arrow[from=1-3, to=1-4]
	\arrow[from=1-4, to=1-5]
	\arrow[from=1-5, to=1-6]
	\arrow[from=1-6, to=1-7]
\end{tikzcd}\]
    where each $P^i$ is generated in degree $\nu_i$ where
    $$\nu_i=\left\{\begin{array}{llll}
        \frac{i}{2}N & \text{if}\ i\ \text{is even}\\
        \frac{i-1}{2}N+1 & \text{if}\ i\ \text{is odd}.
        \end{array}\right.$$
\end{de}

The definition coincides with that of the usual Koszul algebras when $N=2$. Following the definition, AS regular algebras of global dimension 3 are either (2-)Koszul or 3-Koszul. 

$N$-Koszul AS regular algebras as well as $N$-Koszul AS \emph{Gorenstein} algebras, which are a generalization of the former, have been studied intensively in the context of algebras associated to homogeneous $N$-linear forms. Berger and Marconnet showed in \cite[Theorem 1.2]{BergerMarconnet} that an $N$-Koszul algebra of finite global dimension is AS Gorenstein if and only if its Yoneda algebra is Frobenius. Dubois-Violette showed in \cite[Theorem 11]{DuboisViolette} that every 
$N$-Koszul AS Gorenstein algebra of finite global dimension is isomorphic to the algebra $\A(\omega, N)$ described in \cite[Section 5]{DuboisViolette} where $\omega$ is a \emph{preregular}(\cite[Definition 2]{DuboisViolette}) linear form. This algebra is a special case of the \emph{derivation-quotient algebra} of some twisted superpotential introduced in \cite{BocklandtSchedlerWemyss}, where the result \cite[Theorem 11]{DuboisViolette} is generalized (\cite[Theorem 6.2]{BocklandtSchedlerWemyss}) to include the class of $N$-Koszul \emph{(twisted) Calabi-Yau algebras}, which is a non-connected generalization of that of $N$-Koszul AS regular algebras. Other studies on $N$-Koszul AS regular algebras include \cite{MoriSmith}, where the homological determinants of automorphisms on such algebras are discussed, and \cite{ChirvasituWaltonWang}, which studies Hopf algebras over them constructed using $N$-linear forms.

In spite of the extensive research on AS Gorenstein algebras in recent years, those classified in \cite{ArtinTateVandenBerghelliptic} are the only known examples of $N$-Koszul AS regular algebras for $N\geq 3$ so far. This fact motivated Kabbaj to study the nonexistence of other such algebras in \cite{Kabbaj}. In the paper, the author shows the following theorem:

\begin{thm}\cite[Theorem 1.2]{Kabbaj}\label{Kabbajmain}
Let $N\geq 3$ and $A$ be an $N$-Koszul graded algebra of finite global dimension $d$ whose Hilbert series is that of a weighted polynomial ring. If $\bbK$ has a finite free resolution, then:
\begin{enumerate}
    \item(\cite[Proposition 3.1, 3.5]{Kabbaj}) The global dimension is $d=2k+1$ where $k$ is an odd integer.
    \item(\cite[Corollary 3.9]{Kabbaj}) The global dimension satisfies $d\geq\frac{2^N-4}{N}+1$.
    \item(\cite[Theorem 3.7]{Kabbaj}) $N$ is prime.
\end{enumerate}
\end{thm}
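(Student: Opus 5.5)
This is Kabbaj's theorem, which the paper cites rather than proves. My plan reconstructs it from the Hilbert series identity.

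The starting point is the identity forced by $N$-Koszulity together with a finite free resolution of $\bbK$. Write $P^i = A\otimes V_i$, where $V_i$ is concentrated in degree $\nu_i$, and put $a_i=\dim V_i$. Then
\[
h_A(t)\,\sum_{i=0}^{d}(-1)^i a_i t^{\nu_i}=1 .
\]
Since $h_A(t)$ is assumed to be the Hilbert series of a weighted polynomial ring, we also have $h_A(t)=\prod_{j=1}^n(1-t^{e_j})^{-1}$. Combining the two gives
\[
\prod_{j=1}^n (1-t^{e_j}) \;=\; \sum_{i=0}^{d}(-1)^i a_i t^{\nu_i}.
\]
The right-hand side is supported only on exponents $\nu_i$, which are congruent to $0$ or $1 \pmod N$. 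Its signs are also rigid: exponents $mN$ carry the sign $+$, and exponents $mN+1$ carry $-$. Because $A$ is generated in degree $1$, the linear coefficient is $-a_1=-\dim A_1$. The whole proof is then an analysis of which products $\prod(1-t^{e_j})$ can have this support pattern.

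Step 1 is to identify the weights. The coefficient of $t$ is $-\#\{j : e_j=1\}$, so $a_1$ of the weights equal $1$. Any $t^k$ with $2\le k\le N-1$ must have coefficient $0$. Expanding, the contributions of small weights $e_j<N$ must cancel, and from this one shows by induction on the exponent that all weights are $1$ or $\ge N$. For the $N$-Koszul case with $d$ finite, one then expects the weights to be $1$ (with multiplicity $a_1$) together with weights $\equiv 0,1$. The top degree is $\nu_d=\sum e_j$, and the constant and top terms are matched by the Gorenstein-type symmetry of the palindromic product.

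Step 2 gives part (1). Evaluate parities and degrees: $\nu_d=\sum e_j$, and the leading coefficient $(-1)^n$ must equal $(-1)^d$. Comparing $\nu_d$ modulo $N$ with the parity of $d$, and using that the product is (anti)palindromic, should force $d$ to be odd, say $d=2k+1$ with $\nu_d=kN+1$. Comparing the symmetric coefficients at $t^{N}$ and $t^{\nu_d-N}$ then forces $k$ to be odd.

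Step 3 gives parts (2) and (3). Reduce the identity modulo $t^N-1$, or rather substitute $t=\zeta$ for a primitive $p$-th root of unity, where $p$ divides $N$. The right-hand side becomes $\sum_{i \text{ even}} a_i-\zeta\sum_{i\text{ odd}}a_i$. The left-hand side vanishes to order $\#\{j : p\mid e_j\}$ at $\zeta$. Working over $\Zint[\zeta]$ and comparing this with the vanishing at $t=1$, where the order is $n$, forces a constraint that fails when $N$ is composite; this gives part (3). For the bound in part (2), count the coefficients in $\prod_{e_j=1}(1-t)$ of the form $\binom{a_1}{m}$ that must be killed for $2\le m\le N-1$. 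The number of weights needed to achieve this cancellation is at least about $(2^N-4)/N$, and each such weight adds to $d$.

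I expect Step 3 to be the main obstacle, particularly the primality argument. The approach I would try first is reduction modulo a prime $p\mid N$ with $p<N$: in $\mathbb{F}_p[t]$ the factors $(1-t^{e})$ with $p\mid e$ become $p$-th powers, and I would compare this with the sparse sign pattern on the right-hand side.
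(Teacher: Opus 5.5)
Your Step 1 is false, and Steps 2 and 3 inherit the error. The paper only cites this theorem, but the pieces it reproduces (Subsection~\ref{remarkmain}, Proposition~\ref{seqp}) show where the plan breaks.

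\textbf{Step 1 and its consequences for parts (2) and (3).} For $N\ge 3$ the coefficient of $t^2$ in $\prod_j(1-t^{e_j})$ is $\binom{a_1}{2}-\#\{j:e_j=2\}$, and it must vanish. So there are $\binom{a_1}{2}\ge 1$ weights equal to $2$. This already happens in the known case $N=3$, $p(t)=(1-t)^2(1-t^2)$.

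In general, vanishing of the coefficients of $t^2,\dots,t^{N-1}$ says exactly that the power sums $\pi_i=\sum_{d\mid i}dn_d$ equal $s^i$ for $i<N$, where $s=a_1\ge 2$ (Proposition~\ref{seqp}(1),(2)). Hence $n_i=\frac1i\sum_{d\mid i}\mu(i/d)s^d\ge 1$ for every $i\le N-1$. Also, weights divisible by $N$ are forbidden, not allowed.

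This is precisely the input Step 3 is missing. Your substitution is the right one: for a prime $\ell\mid N$ and a primitive $\ell$-th root of unity $\zeta$, the right-hand side equals $(1-\zeta)q(1)\neq 0$, so no weight is divisible by $\ell$. Under your Step 1 picture this is no contradiction, and the unspecified comparison with the vanishing order at $t=1$ does not supply one. The contradiction is simply that $\ell<N$ forces $n_\ell=(s^\ell-s)/\ell\ge 1$.

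Part (2) has the same problem. ``At least about $(2^N-4)/N$'' is not a bound. ``Each weight adds to $d$'' conflates the number of weights with $d$: the number of weights is the pole order at $t=1$, which is only $\le d$ by Descartes' rule of signs. With the correct counts you get directly $kN+1=\sum_j e_j\ge \pi_{N-1}=s^{N-1}\ge 2^{N-1}$. Hence $d=2k+1\ge \frac{2^N-2}{N}+1$, which implies the stated bound.

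\textbf{Step 2.} Write $p(t)=\prod_j(1-t^{e_j})=q(t)-t^{kN+1}q(1/t)$ with $q(t)=\sum_i a_{2i}t^{iN}$.
\begin{itemize}
\item The oddness of $d$ is fine once made concrete, as in Subsection~\ref{remarkmain}. If $d$ were even, $p$ would be palindromic. Then the coefficient $-a_1\neq 0$ of $t$ would have to reappear at $t^{\nu_d-1}$, an exponent $\equiv -1\pmod N$, which is impossible.
\item Comparing the coefficients of $t^N$ and $t^{\nu_d-N}$ only yields $a_2=a_{2k-1}$. Antipalindromy gives this for every $k$, so it says nothing about the parity of $k$.
\item An argument that does work, for $N$ odd (e.g.\ after part (3)): $p(-1)=(1+(-1)^k)\,q(-1)$, and $p(-1)=0$ because $n_2>0$. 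For a primitive $2N$-th root of unity $\eta$, $p(\eta)=q(-1)\bigl(1-(-1)^k\eta\bigr)$. This is nonzero because no weight is divisible by $N$, so $q(-1)\neq 0$ and $k$ must be odd.
\end{itemize}
Again the decisive fact, $n_2>0$, is exactly what your Step 1 denies.
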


Here, the algebra $A$ is assumed to satisfy two assumptions: it is assumed to have a Hilbert series of a weighted polynomial ring (\ref{weighted}) --  we will call it the \emph{Hilbert series hypothesis} -- and the trivial $A$-module $\bbK$ is to have a finite free resolution. These two assumptions are widely believed to hold for all AS regular algebras and are therefore considered mild, since our main interest is the existence of $N$-Koszul AS regular algebras (for $N\geq 3$). The above theorem supports the following conjecture:

\begin{conj}\cite[Conjecture 1.3]{Kabbaj}\label{Kabbajconj}
    Let $N\geq3$ and $A$ be an $N$-Koszul graded algebra of finite global dimension $d$ whose Hilbert series is that of a weighted polynomial ring. If $\bbK$ has a finite free resolution, then $A$ is a 3-Koszul AS regular algebra of global dimension 3.
\end{conj}

The key idea of the proof of Theorem \ref{Kabbajmain} is to consider the relationship between the \emph{characteristic polynomial} of the trivial left $A$ module $\bbK$, which is defined thanks to the assumption that $\bbK$ has a finite free resolution, and the Hilbert series $h_A(t)$ of $A$, and then apply the Hilbert series hypothesis \ref{weighted}: if $p(t)$ denotes the characteristic polynomial of the trivial module $\bbK$, then $h_A(t)=\frac{1}{p(t)}$ and $p(t)$ can be written in the following ways(\cite[remark 3.2, (3.3.2), (3.3.1)]{Kabbaj}):
\begin{equation}\label{Kabbajeq1}
p(t)=1-\beta_1+\beta_2t^N-\beta_3t^{N+1}+\cdots-\beta_2t^{(k-1)N+1}+\beta_1t^{kN} -t^{kN+1}
\end{equation}
and
\begin{equation}\label{Kabbajeq2}
    p(t)=\prod_{i=1}^m(1-t^i)^{n_i}
\end{equation}
where $k$, $\beta_i, 1\leq i\leq k$, and $m$ are all positive integers, $n_i$'s are nonnegative integers, and $n_m>0$ (the term “positive" in \cite[remark 3.2]{Kabbaj} should be “non-negative" instead). Letting $$\beta_0:=1, \beta_{2k+1-i}:=\beta_{i}\ \text{for all}\ 0\leq i\leq k\ \text{and}\ q(t):=\sum^k_{i=0}\beta_{2i}t^{iN},$$ 
the expression \ref{Kabbajeq1} can also be written as
$$p(t)=q(t)-t^{kN+1}q(1/t).$$
Assuming that the equations \ref{Kabbajeq1} and \ref{Kabbajeq2} hold, $N$ is shown to be prime by leading to a contradiction from the assumption that $N$ has a nontrivial divisor (\cite[Theorem 3.7]{Kabbaj}).

The goal of this paper is to show that, under the assumption that $N$ is an odd prime and that $\sum^m_{i=1}n_i>\frac{21}{11}k+1$, the above equalities in fact lead to a contradiction unless $N=3$ and $k=1$. For a simpler notation, we use $\alpha_i$ instead of $\beta_i$. In the following main theorem, the coefficients $\alpha_i$ correspond to the coefficients $\beta_{2i}$ in the above setting.

\begin{thm}\label{main}
    Let $q(t)\in\Zint[t]$ be a polynomial of the form
    $$q(t)=\sum^k_{i=0}\alpha_it^{iN}$$
    where $N$ is an odd prime, $k$ is a positive integer, $\alpha_0=1$ and the coefficients $\alpha_i$'s are all positive. Set
    $$p(t):=q(t)-t^{kN+1}q(1/t)\in\Zint[t].$$
    Suppose that
    \begin{enumerate}
        \item the polynomial $p(t)$ can be written in the form
    $$p(t)=\prod^m_{i=1}(1-t^i)^{n_i}$$
    where $n_i\geq0$ for all $1\leq i\leq m$, $n_m>0$, and
       \item $\sum_{i=1}^mn_i>\frac{21}{11}k+1$.
    \end{enumerate}
     then $N=3$, $k=1$, $p(t)=(1-t)^2(1-t^2)$ and $q(t)=1+2t$.
\end{thm}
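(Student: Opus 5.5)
The plan is to extract two kinds of constraints on the exponents $n_i$ from the special shape of $p(t)$, and then play them against hypothesis (2). The first constraint comes from the low-degree coefficients of $p$, which pin down $n_1,\dots,n_{N-1}$. The second comes from the degree count $\sum_i i\,n_i=\deg p=kN+1$, which makes the factors with $i\geq N$ expensive.

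First, I record the structure of $p$. Write $a:=\alpha_k\geq 1$. Then
$$p(t)=1-a t+(\text{terms of degree}\geq N),$$
and the only nonzero coefficients of $p$ sit in degrees $\equiv 0,1 \pmod N$. Moreover $t^{kN+1}p(1/t)=-p(t)$. Comparing with $t^i(1-t^{-i})=-(1-t^i)$ shows that $\sum n_i$ is odd, and comparing degrees gives $\sum_i i\,n_i=kN+1$.

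Next I take logarithms. We have
$$\log p(t)=-\sum_{i}n_i\sum_{j\geq1}\frac{t^{ij}}{j},$$
while $\log(1-at+O(t^N))=-\sum_{j<N}\frac{a^j}{j}t^j+O(t^N)$. Comparing coefficients for $j<N$ gives
$$\sum_{i\mid j} i\,n_i=a^j \qquad (1\leq j\leq N-1),$$
so by Möbius inversion $n_i=\frac1i\sum_{d\mid i}\mu(i/d)a^d$ is the necklace number $M(a,i)$ for $i<N$. All of these are positive when $a\geq 2$, and $n_1=1$, $n_i=0$ for $2\le i<N$ when $a=1$.

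Now every index $i\geq N$ costs at least $N$ in degree. Setting $S:=\sum_{i<N}n_i$ and $D:=\sum_{i<N}i\,n_i$, we get
$$\sum_{i\ge1} n_i\;\le\; S+\frac{kN+1-D}{N}\;=\;k+\frac1N+\sum_{i<N}n_i\Bigl(1-\frac iN\Bigr).$$
We also have $D\leq kN+1$, and $D$ is roughly $\sum_{j<N}$ of the dominant terms $a^j$, so $k\gtrsim a^{N-1}/N$. In the case $a=1$ the bound reads $\sum n_i\le k+1$, which contradicts (2) immediately. For $a\geq2$, hypothesis (2) forces the excess $\sum_{i<N}n_i(1-i/N)$ to exceed roughly $\frac{10}{11}k$. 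However, this excess is of size about $\sum_{j<N}a^j/j$, which is dominated by $a^{N-1}/(N-1)$, whereas $k$ is at least about $\sum_{j<N}a^j/N$. Comparing these two sums should give a contradiction for all but finitely many pairs $(N,a)$. I expect only $N=3$ and possibly $N=5$ with $a=2$ to survive, and the constant $\frac{21}{11}$ is presumably tuned exactly so that the surviving cases are tiny.

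The main obstacle is this finite residue of small cases, together with making the inequality sharp enough near $N=3,5$. For these I would use further coefficient information:
\begin{itemize}
\item the vanishing of the coefficients of $p$ in degrees $\not\equiv 0,1 \pmod N$, in particular degrees $N+2,\dots,2N-1$, which determines $n_N,\dots,n_{2N-2}$ in terms of $a$ and $\alpha_{k-1},\alpha_1$;
\item the reduction $p\equiv q(1)(1-t)\pmod{t^N-1}$, which shows $\Phi_N\nmid p$, hence $\sum_{N\mid i}n_i=0$, and constrains the image of the product in $\Zint[t]/(t^N-1)$.
\end{itemize}
For $N=3$ this should leave only $a=2$, $k=1$, $p=(1-t)^2(1-t^2)$, and the remaining candidates for $N\ge5$ should be excluded by direct computation.
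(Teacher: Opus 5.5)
Your framework is sound, but the decisive estimate is wrong, so the contradiction you announce does not follow from what you wrote. With $S=\sum_{i<N}n_i$ and $D=\sum_{i<N}in_i$, the excess is $E=\sum_{i<N}n_i(1-\tfrac iN)=S-\tfrac DN$. You estimated it by $S$ itself ($\approx\sum_{j<N}a^j/j$, leading term $a^{N-1}/(N-1)$), dropping the $-\tfrac iN$. With those sizes, the inequality $E>\tfrac{10}{11}k$ that (2) forces would simply hold, since $\tfrac{1}{N-1}>\tfrac{10}{11N}$, and nothing would be contradicted. The true leading term of $E$ is $n_{N-1}/N\approx a^{N-1}/(N(N-1))$, about a factor $N$ smaller. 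Because of this you misjudge what survives (you expect $N=5$, $a=2$ might). You then hand the residue to further tools and an unspecified ``direct computation'', which is not yet an argument.

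The repair is short and makes the second half of your plan unnecessary. Your bound and (2) give $\tfrac{10}{11}k<S-1-\tfrac{D-1}{N}$, and $D\le kN+1$ gives $k\ge\tfrac{D-1}{N}$. Together these give $21(D-1)<11N(S-1)$, a condition on $(N,a)$ alone since $n_i=M(a,i)$ for $i<N$. For $a=1$ it reads $0<0$. For $N=3$ it is $(a-1)(3a-8)<0$, so $a=2$. For $N\ge5$ it is false for every $a\ge2$: in $21D-11NS=\sum_{i<N}(21i-11N)M(a,i)$, the term $(10N-21)M(a,N-1)$ dominates the negative terms $i<11N/21$. The bounds $M(a,i)\le a^i/i$ and $(N-1)M(a,N-1)\ge a^{N-1}-2a^{(N-1)/2}$ settle all but finitely many pairs, and those are checked directly; e.g.\ $N=5$, $a=2$ gives $S=8$, $D=22$ and $441\ge385$. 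For $(N,a)=(3,2)$ the first inequality reads $\tfrac{10}{11}k<1$, so $k=1$. Then $D=4=kN+1$ forces $n_i=0$ for $i\ge3$, so $p=(1-t)^2(1-t^2)=1-2t+2t^3-t^4$ and $\alpha_1=2$, i.e.\ $q(t)=1+2t^3$ (which is what the statement's ``$1+2t$'' must mean).

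Completed this way, your route is genuinely different from, and much lighter than, the paper's. The paper proves the same degree count (Lemma~\ref{lockout}, with $N+1$ in place of $N$ via $n_N=0$), but only for $m\ge N+1$. For $N\ge7$ it pairs it with the lossy bound $n_i\le 2s^i$, so it also needs:
\begin{itemize}
\item the lower bound on $\pi_{2N-2}$;
\item Nagura's theorem with $s^2\mid\pi_l$ when $m<2N-1$;
\item the explicit formulas for $\pi_{N+1},\pi_{N+2}$ when $m\le N-1$.
\end{itemize}
Pairing the count with $D\le kN+1$ for every $m$ avoids all of that.
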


The fraction $\frac{21}{11}$ is related to  inequality $(**)$ that appears in the proof of Corollary \ref{7m<2N-1}. We might be able to improve this result by taking a smaller coefficient of $k$ and weakening the assumption $(2)$. However, the author thinks that the improvement of the coefficient possibly requires a significant amount of computation. 

Note that the sum $\sum_{i=1}^mn_i$ is at most $2k+1$ by \emph{Descartes' rule of signs}.
The first assumption $(1)$ corresponds to the assumption in Theorem \ref{Kabbajmain} that the $N$-Koszul algebra $A$ has a Hilbert polynomial of a weighted polynomial ring. If the $N$-Koszul algebra $A$ is AS regular, the second assumption means that the difference between the global dimension and the GK dimension is not too large. Indeed, $2k+1=d$ is the global dimension of $A$ as in Theorem \ref{Kabbajmain}, whereas the sum $\sum_{i=1}^m n_i$ corresponds to the order of the pole of the Hilbert series $h_A(t)$ at $t=1$, which is equal to the GK dimension of the algebra (\cite[Proposition 2.21]{ArtinTateVandenBergh}, \cite[Corollary 2.2]{StephensonZhang}). The equality between those dimensions is believed to hold for all AS regular algebras, and hence this assumption is considered to be mild. Together with \cite[Remark 3.2]{Kabbaj}, Theorem \ref{main} implies that Conjecture \ref{Kabbajconj} is true under an assumption on the order of the pole of $h_A(t)$ at $t=1$:


\begin{cor}\label{NKcor}\ref{Ncor}
    Let $N\geq 3$ and $A$ be an $N$-Koszul algebra of finite global dimension $d$ whose trivial module $\bbK$ has a finite free resolution.
    If the Hilbert series of $A$ is that of a weighted polynomial ring and the order of the pole of its Hilbert series $h_A(t)$ at $t=1$ is greater than $\frac{21d+1}{22}$, then $A$ is a $3$-Koszul AS regular algebra of global dimension $3$.
\end{cor}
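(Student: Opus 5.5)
The corollary should follow by translating its hypotheses into those of Theorem \ref{main} and then identifying the single algebra that survives. The main obstacle is not this translation but the Gorenstein condition in the last step.

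\textbf{Step 1: the global dimension and the polynomials.} By Theorem \ref{Kabbajmain}(1), $d=2k+1$ for a positive integer $k$. By \cite[Remark 3.2]{Kabbaj}, $h_A(t)=1/p(t)$, where $p(t)$ has the form (\ref{Kabbajeq1}). The Hilbert series hypothesis gives the product form (\ref{Kabbajeq2}) with $n_i\geq 0$ and $n_m>0$. Put $\alpha_i:=\beta_{2i}$ and $q(t)=\sum_{i=0}^k\alpha_it^{iN}$. Then $p(t)=q(t)-t^{kN+1}q(1/t)$, $\alpha_0=1$, and each $\alpha_i$ is the positive rank of a free module in the minimal resolution. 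By Theorem \ref{Kabbajmain}(3), $N$ is prime, and $N\geq 3$ forces $N$ to be odd. So hypothesis (1) of Theorem \ref{main} holds.

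\textbf{Step 2: the pole-order hypothesis.} The order of the pole of $h_A(t)=\prod_i(1-t^i)^{-n_i}$ at $t=1$ is $\sum_i n_i$. Substituting $d=2k+1$ gives
$$\frac{21d+1}{22}=\frac{42k+22}{22}=\frac{21}{11}k+1,$$
so the hypothesis of the corollary is exactly hypothesis (2) of Theorem \ref{main}. That theorem then yields $N=3$, $k=1$ (so $d=3$), $q(t)=1+2t$ and $p(t)=(1-t)^2(1-t^2)$. Hence the minimal resolution of $\bbK$ has the shape
$$0\to A(-4)\to A(-3)^2\to A(-1)^2\to A\to\bbK\to 0.$$
In particular $A$ has two generators and two cubic relations. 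Also $\dim_\bbK A_n$ grows quadratically, so $A$ has finite GK-dimension.

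\textbf{Step 3: the Gorenstein property.} It remains to show that $A$ is AS Gorenstein; this is the delicate step. I would use \cite[Theorem 1.2]{BergerMarconnet}: $A$ is AS Gorenstein if and only if its Yoneda algebra $E=E^0\oplus E^1\oplus E^2\oplus E^3$ is Frobenius. Let $V=A_1$ and let $R\subset V^{\otimes 3}$ be the space of relations. The dimensions of $E$ are $1,2,2,1$, with $E^1=V^*$, $E^2=R^*$, and $E^3=W^*$ where $W=(V\otimes R)\cap(R\otimes V)=\bbK w$. The Frobenius property amounts to nondegeneracy of the two pairings $E^1\otimes E^2\to E^3$ and $E^2\otimes E^1\to E^3$. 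Write $w=\sum_j x_j\otimes r_j=\sum_j r'_j\otimes y_j$ with $r_j,r'_j\in R$ and $x_j,y_j\in V$. Nondegeneracy means that $w$ lies in no $V\otimes R'$ or $R'\otimes V$ for a proper subspace $R'\subset R$, and in no $V'\otimes R$ or $R\otimes V'$ for a proper subspace $V'\subset V$.

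\textbf{Step 4: ruling out degeneracy.} I would prove this by contradiction using exactness and minimality of the resolution. The map $A(-4)\to A(-3)^2$ is given by the row $(x_1,x_2)$ of linear forms. Suppose $w$ is degenerate, say $x_2=0$ after a change of basis, or $x_1,x_2$ proportional. Then the generator of $A(-4)$ maps into a single summand $A(-3)$. Exactness at $A(-3)^2$ would then force the second basis relation $r_2$ (or a suitable combination) to satisfy $a r_2\in\bbK\cdot$(relations of lower order) in the tensor algebra. That is impossible in a free algebra, or it contradicts the dimension count $\dim A_4$ coming from $h_A(t)$. The symmetric argument applies to the right-hand decomposition, using the analogous resolution of the right trivial module.

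Hence $E$ is Frobenius, $A$ is AS Gorenstein, and with finite global and GK dimension $A$ is a $3$-Koszul AS regular algebra of global dimension $3$. If this direct check becomes messy, a fallback is to appeal to Artin--Schelter's analysis \cite{ArtinSchelter} of resolutions of this exact type.
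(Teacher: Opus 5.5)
Your Steps 1 and 2 are the paper's own proof. It reduces to Theorem \ref{main} via Kabbaj's results, checks $\frac{21d+1}{22}=\frac{21}{11}k+1$, concludes $N=3$, $k=1$, $d=3$, and stops there without addressing the Gorenstein condition. You are right that more is needed for ``AS regular''. But Step 4 fails, and no argument of that kind can succeed, because the hypotheses do not imply AS regularity.

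Take the monomial algebra $A=\bbK\langle x,y\rangle/(x^2y,\,xy^2)$. Its Anick chains are $x,y$ in degree $1$, $xxy,\,xyy$ in degree $3$, and $xxyy$ in degree $4$. There are no longer chains, since no relation begins with $y$. So the minimal resolution of $\bbK$ is $0\to A(-4)\to A(-3)^2\to A(-1)^2\to A\to\bbK\to 0$. Hence $A$ is $3$-Koszul of global dimension $3$, and $h_A(t)=1/((1-t)^2(1-t^2))$ (for instance $\dim A_4=9$). The pole at $t=1$ has order $3>\frac{64}{22}$, so every hypothesis of the corollary holds.

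Yet $w=xxyy=x\otimes xy^2=x^2y\otimes y$ is degenerate on both sides. The map $A(-4)\to A(-3)^2$ is $1\mapsto(0,x)$, which is exactly the case you claim exactness excludes. The complex is nevertheless exact, as one checks on normal words:
\begin{itemize}
\item right multiplication by $x$ is injective;
\item $a\,x^2+b\,xy=0$ forces $a=0$ and $b\in Ax$.
\end{itemize}
Accordingly the Yoneda algebra $E$ is not Frobenius. Concretely, $\ext^3_A(\bbK,A)$ is a shift of $A/xA$, which contains every $y^n$ and so is infinite-dimensional. $A$ is not even a domain, since $x\cdot xy=0$. Exactness, minimality and the dimension count therefore cannot force nondegeneracy. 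Your fallback to \cite{ArtinSchelter} does not help either, since their analysis of such resolutions assumes the Gorenstein condition.

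What the hypotheses actually yield, and all the paper's proof establishes, is that $A$ is $3$-Koszul of global dimension $3$ with the resolution and Hilbert series you wrote in Step 2. The ``AS regular'' conclusion needs an additional hypothesis such as the Gorenstein condition. Corollary \ref{ASRcor} supplies this by assuming $A$ is AS regular from the start. As stated, the corollary overclaims, and your proposal should end after Step 2 with the weaker conclusion.
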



Furthermore, the trivial left $A$-module $k$ has a finite free resolution as long as $A$ satisfies the Gorenstein condition of AS regular algebras (\cite[Proposition 3.1]{StephensonZhang}). Therefore:

\begin{cor}\label{ASRcor}\ref{AScor}
    Let $N\geq 3$ and $A$ be an $N$-Koszul AS regular algebra over $k$ of finite global dimension $d$. If the Hilbert series of $A$ is that of a weighted polynomial ring and $A$ has GK dimension $d$, then $A$ is a $3$-Koszul AS regular algebra of global dimension $3$. 
\end{cor}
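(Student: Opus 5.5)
The approach is to extract the exponents $n_i$ from the low-degree coefficients of $p(t)$, and then play the resulting exponential growth against the degree identity $\sum_i in_i=\deg p=kN+1$ and the lower bound (2) on $s:=\sum_i n_i$.

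\textbf{Step 1: low-degree coefficients.} By construction, $p(t)$ has nonzero coefficients only in degrees $\equiv 0$ or $1 \pmod N$. Moreover $t^{kN+1}p(1/t)=-p(t)$. In particular
$$p(t)\equiv 1-\alpha_k t \pmod{t^N}.$$
Write $a:=\alpha_k\geq 1$. Taking logarithms of $\prod_i(1-t^i)^{n_i}\equiv 1-at \pmod{t^N}$ and comparing coefficients of $t^r$ for $r<N$ gives the necklace identities
$$\sum_{i\mid r} i\,n_i=a^r, \qquad\text{hence}\qquad r\,n_r=\sum_{d\mid r}\mu(r/d)\,a^d \quad (1\leq r<N).$$

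\textbf{Step 2: roots of unity.} Evaluating at a primitive $N$-th root of unity $\zeta$ gives $p(\zeta)=q(1)(1-\zeta)\neq 0$, because $q(\zeta)=q(1)>0$. Hence $n_i=0$ whenever $N\mid i$. I would also record the analogous identities in the window $N\leq r<2N$. There the coefficients of $p$ vanish except in degrees $N$ and $N+1$, whose coefficients are $\alpha_1$ and $-\alpha_{k-1}$. This yields a second family of relations linking $n_r$ for $N<r<2N$ to $a$, $\alpha_1$ and $\alpha_{k-1}$.

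\textbf{Step 3: the case analysis on $a$.}

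If $a=1$, Step 1 gives $n_1=1$ and $n_r=0$ for $1<r<N$. Together with Step 2, every other factor has $i>N$. Then
$$kN+1=\sum_i in_i\geq 1+(N+1)(s-1),$$
so $s\leq 1+\frac{kN}{N+1}<k+1$. This contradicts (2).

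If $a\geq 2$, then $n_r\approx a^r/r$ for $r<N$. The degree identity then forces
$$kN\gtrsim \sum_{r<N} r\,n_r\approx a^{N-1}$$
(essentially Kabbaj's bound), so $k$ is exponentially large in $N$. On the other hand, the factors $(1-t^r)^{n_r}$ with $r<N$ already contribute $\sum_{r<N}n_r$ to $s$, but they carry most of the degree. One then has to show that the remaining degree $kN+1-\sum_{r<N}rn_r$ cannot accommodate enough further factors to push $s$ above $\frac{21}{11}k+1$, unless $N=3$ and $k=1$. My plan is to bound the largest index $m$ against $N$, aiming for an inequality of the shape $7m<2N-1$. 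The route is to combine the symmetry $t^{kN+1}p(1/t)=-p(t)$, which forces the multiplicity of each primitive $j$-th root of unity to respect the same reflected constraints at the top degrees, with Descartes' bound $s\leq 2k+1$ and the hypothesis $s>\frac{21}{11}k+1$. Once $m<N$ is known, all $n_i$ are given by Step 1. The degree identity $\sum_{r<N}rn_r=kN+1$ together with the formula for $s$ then becomes an explicit inequality in $a$ and $N$. Checking it leaves only $N=3$, $a=2$, $k=1$, which gives $p=(1-t)^2(1-t^2)$ and $q=1+2t$.

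\textbf{Main obstacle.} The hardest step is bounding $m$, i.e.\ ruling out many factors $(1-t^i)^{n_i}$ with $i>N$. Those factors are invisible modulo $t^N$. One has to use the windows $[jN,(j+1)N)$ of vanishing coefficients and the positivity of the $\alpha_i$ to show they would force $s$ below $\frac{21}{11}k+1$. This is where I expect the constant $\frac{21}{11}$ to enter, through an explicit but delicate estimate.
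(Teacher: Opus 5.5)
\textbf{There is a genuine gap.} Everything after ``If $a\geq 2$'' is a plan rather than a proof. The step you flag as the main obstacle, controlling the factors $(1-t^i)^{n_i}$ with $i>N$, is where all the difficulty lies under hypothesis (2).

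The route you suggest for that step cannot work. For \emph{every} product $p=\prod_i(1-t^i)^{n_i}$ one has $t^{\sum_i in_i}p(1/t)=(-1)^{\sum_i n_i}p(t)$ identically. So the reciprocal symmetry only says that $s$ is odd, and the ``reflected constraints at the top degrees'' are just the bottom-degree ones again. Descartes' bound and (2) are likewise only global counts. (Also, the target ``$7m<2N-1$'' fails for the genuine solution $N=3$, $m=2$.)

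The paper gets genuinely new information from the coefficient windows $[N,2N)$ and $[2N,3N)$. Writing $\pi_n=\sum_{d\mid n}dn_d$:
\begin{itemize}
\item it expresses $\pi_{N+l}$ through $a$ and $\alpha_{k-1}$, and from $\pi_{2N-1}\geq 0$ deduces $\pi_{N+l}\geq\frac{N-l-1}{2N-1}a^{N+l}$;
\item it shows $a^2\mid\pi_n$ for $2N+3<n\leq 3N-1$;
\item it uses Nagura's theorem to find a prime $l$ in that range, for which $\pi_l=n_1=a$ once $m<l$.
\end{itemize}

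You also never derive the polynomial setup from $A$, and this reduction is the entire content of the paper's proof of this corollary. The Gorenstein condition gives a finite free resolution of $\bbK$ (Stephenson--Zhang), so $h_A=1/p$. Kabbaj's analysis gives $d=2k+1$ with $k\geq 1$, the shape of $p$, and $N$ prime. Finally, $\gk A$ equals the order of the pole of $h_A$ at $t=1$, namely $\sum_i n_i$, so $\gk A=d$ yields (2), and Corollary~\ref{Ncor} applies.

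\textbf{How to close the gap for this corollary.} You can do it with your own tools, because here the hypothesis gives $s=2k+1$ exactly, not just (2). Run your $a=1$ count in general. Set $S_0=\sum_{i<N}n_i$ and $T_1=\sum_{i<N}in_i$. Since $n_N=0$, there are $2k+1-S_0$ factors with $i\geq N+1$, each of degree at least $N+1$, so $kN+1-T_1\geq (N+1)(2k+1-S_0)$. Also $kN+1\geq T_1$. Eliminating $k$ gives
\[\sum_{i=1}^{N-1}(N-2i)\,n_i\;\geq\;N-2 .\]
The left side equals $\sum_{1\leq i<N/2}(N-2i)(n_i-n_{N-i})$.

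For $a\geq 2$ and $N\geq 4$, the numbers $n_r$ from Step~1 satisfy $n_i\leq n_{N-i}$ for these $i$. Indeed, they are nondecreasing from $r=2$ on, and $n_3=\frac{a^3-a}{3}\geq a=n_1$. So the left side is $\leq 0<N-2$, a contradiction.

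For $N=3$ the inequality reads $a-\binom{a}{2}\geq 1$, so $a=2$, with equality. Equality forces $kN+1=T_1$, i.e.\ there are no factors of degree $>N$. Hence $3k+1=4$, $k=1$ and $p=(1-t)^2(1-t^2)$.

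This proves the corollary without Bell polynomials, Nagura's theorem, or primality of $N$. Note that it uses $s=2k+1$ rather than (2), and it still needs the reduction from $A$ described above.
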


We close the introduction by posing the following natural question:

\begin{question}
    Does the assertion of Theorem \ref{main} still hold without Assumption $(2)$?
\end{question}

Note that if the answer to this question is “Yes", then Conjecture \ref{Kabbajconj} is true.

\section{Background}

We first briefly review some basic notions in Subsection \ref{prelim}. In the first half of Subsection \ref{remarkmain}, we recall how to deduce the equalities \ref{Kabbajeq1} and \ref{Kabbajeq2} from the assumption of Conjecture \ref{Kabbajconj}. Then we prove Corollaries \ref{NKcor} and \ref{ASRcor} using Theorem \ref{main} in the second half of Subsection \ref{remarkmain}. We keep the notations used in \cite{Kabbaj} as much as possible. 

\subsection{Preliminaries}\label{prelim}

See \cite{Kabbaj}, \cite{McConnellRobson}, \cite{PositselskiPolishchuk}, \cite{ReyesRogalski}, \cite{RogalskiAS} and \cite{StephensonZhang} for a more detailed explanation of the contents of this subsection. 

Throughout the paper, we fix a field $\bbK$. All vector spaces are $\bbK$-vector spaces. A $\bbK$-algebra $A$ is called \emph{$\Nnat$-graded} if $A$ is equipped with a decomposition $A=A_0\oplus A_1\oplus A_2\oplus\cdots$ as a vector space such that $A_iA_j\subset A_{i+j}$ for all $i, j\geq0$. The elements in $A_n$ are said to be of \emph{degree $n$}. The algebra $A$ is said to be 
\begin{enumerate}
    \item \emph{locally finite} if $\dim_\bbK A_n<\infty$ for all $n\geq0$,
    \item \emph{connected} if $A_0=\bbK$, and
    \item \emph{generated in degree one} if $A$ is generated by the elements in $A_1$ as a $\bbK$-algebra.
\end{enumerate}
\textbf{All graded algebras are assumed to satisfy these three conditions throughout the paper.}

Let $A$ be such an algebra. A left $A$-module $M$ is called \emph{$\Zint$-graded} if $M$ is equipped with a decomposition $M=\bigoplus_{n\in\mathbb{Z}}M_n$ as a vector space such that $A_iM_j\subset M_{i+j}$ for all $i\geq0$ and $j\in\mathbb{Z}$. Graded right $A$-modules are defined similarly. All modules over an $\Nnat$-graded algebra in this paper will be left modules and $\Zint$-graded. For a graded module $M$ and $n\in\mathbb{Z}$, we denote by $M(n)$ the graded module isomorphic to $M$ as an $A$-module whose grading is defined by $M(n)_i:=M_{i+n}$ for all $i\in\mathbb{Z}$.

A graded $A$-module $M$ is said to be 
\begin{enumerate}
    \item \emph{locally finite} if $\dim_\bbK M_n<\infty$ for all $n\in\mathbb{Z}$.
    \item \emph{left bounded} if there exists $n\in\mathbb{Z}$ such that $M_i=0$ for all $i<n$.
\end{enumerate}
For such graded module $M$, the \emph{Hilbert series} $h_M(t)\in\Zint[[t, t^{-1}]]$(\cite{ArtinTateVandenBergh}, \cite{StephensonZhang}, \cite{PositselskiPolishchuk}) is defined by
$$h_M(t):=\sum_{i\in\mathbb{Z}}(\dim_\bbK M_i)t^i$$
The $A$-module $A_+:=A_1\oplus A_2\oplus\cdots$ is called the \emph{irrelevant ideal} and the quotient module $A/A_+\simeq A_0=\bbK$ is called the \emph{trivial} $A$-module. The trivial module is locally finite and left bounded. By definition, $h_\bbK(t)=1$. The Hilbert series of a graded algebra $A$ is that of $A$ seen as a graded $A$-module.

\begin{de}[Hilbert series of a weighted polynomial ring]\label{weighted}\cite[Definition 2.3]{Kabbaj}
    A graded algebra $A$ is said to have the \emph{Hilbert series of a weighted polynomial ring} if there exist nonnegative integers $n_i$ and positive integer $m$ such that
    $$h_A(t)=\prod_{i=1}^m\frac{1}{(1-t^i)^{n_i}}.$$
\end{de}

Next, we review some definitions of dimensions defined for graded algebras. Let $A$ be a graded algebra and $M$ be a graded left $A$-module. Recall that the \emph{left projective dimension} of $M$ is the minimal (possibly infinite) length of its (ungraded) projective resolutions. The \emph{left global dimension} of $A$ is the supremum of the left projective dimensions of its (ungraded) left modules. Similarly, the \emph{graded left projective dimension} of $M$ is the minimal (again, possibly infinite) length of its \emph{graded} projective resolutions, and the \emph{graded left global dimension} of $A$ is the supremum of the \emph{graded} left projective dimensions of its \emph {graded} left modules. The \emph{right projective dimension} and the \emph{graded right projective dimension} of a graded right $A$-module, and furthermore the \emph{right global dimension} and the \emph{graded right global dimension} of $A$ are defined similarly. In our case where $A$ is connected, these four kinds of global dimensions of $A$ are all equal to the (left or right) projective dimension of the (ungraded) $A$-module $\bbK$ by \cite[Proposition 3.18(3)]{ReyesRogalski}. Therefore, they will simply be called the \emph{global dimension} of $A$.

There is another kind of dimension defined for (ungraded) modules over (ungraded) algebras, namely the \emph{Gelfand-Kirillov(GK) dimension}. The GK dimension of an algebra is that of itself seen as a left module. In this paper, we will only use the GK dimensions of algebras, so we do not state the general definition of the GK dimension. However, if $A$ is a graded algebra in our setting, then the GK dimension $\gk(A)$ is given by
$$\gk(A)=\limsup_{n\rightarrow\infty}\log_nd(n)$$
where $d(n)$ denotes the dimension of the subspace $\bbK\oplus A_1\oplus\cdots\oplus A_n\subset A$. Thus the Hilbert series of a graded module measures the growth of the subspaces in this form. 
If the Hilbert series of $A$ is nice enough, then it tells us the GK dimension; indeed, \cite[Proposition 2.21]{ArtinTateVandenBergh} and \cite[Corollary 2.2]{StephensonZhang} imply that if $A$ is AS regular and has a Hilbert series of a weighted polynomial ring, then the GK dimension of $A$ coincides with the order of the pole of $h_A(t)$ at $t=1$.


If the graded module $M$ is nice enough, a characteristic polynomial is also defined by using its minimal free resolution. Let $A$ be a graded algebra with finite global dimension $d$ and $M$ be a graded left $A$-module. By \cite[p. 339]{ArtinTateVandenBergh}, or by \cite[Lemma 2.6]{MinamotoMori} and \cite[Section 2.1]{ReyesRogalski}, every graded left bounded projective $A$-module is of the form $\oplus_i A(l_i)$ since $A$ is connected. Thus $M$ admits an augmented minimal free resolution of the form
\[\begin{tikzcd}[column sep=small]
	0 & {\bigoplus^{\beta_d}_{i=1}A(-l^d_i)} & \cdots & {\bigoplus^{\beta_1}_{i=1}A(-l^1_i)} & {\bigoplus^{\beta_0}_{i=1}A(-l^0_i)} & M & 0
	\arrow[from=1-1, to=1-2]
	\arrow[from=1-2, to=1-3]
	\arrow[from=1-3, to=1-4]
	\arrow[from=1-4, to=1-5]
	\arrow[from=1-5, to=1-6]
	\arrow[from=1-6, to=1-7]
\end{tikzcd}\]
    where $\beta_0, \cdots, \beta_d$ are some (possibly infinite) cardinals. The module $M$ is said to have a \emph{finite free resolution} if $\beta_0, \cdots, \beta_d$ are all finite.

\begin{de}[characteristic polynomial, \cite{ArtinTateVandenBergh}, \cite{StephensonZhang}]
     If $M$ has a finite free resolution, the \emph{characteristic polynomial} $c_M(t)$ of $M$ is defined by
    $$c_M(t):=\sum^d_{j=0}(-1)^i\left(\sum^{\beta_j}_{i=1}t^{l^j_i}\right).$$
\end{de}

The following lemma relates the Hilbert series of an algebra to that of a module. 

\begin{lem}\cite[Lemma 2.3]{StephensonZhang}\label{2.3}
Suppose $M$ has a finite free resolution and let $c_M(t)$
be the characteristic polynomial of $M$. Then $h_M(t)=c_M(t)h_A(t)$.
\end{lem}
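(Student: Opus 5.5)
The plan is to compute Hilbert series degree by degree from the finite free resolution, using only two facts: a shift rescales a Hilbert series by a power of $t$, and Euler characteristics vanish on finite exact sequences of finite-dimensional vector spaces.

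First, record the effect of a shift. For any integer $l$ we have $A(-l)_n = A_{n-l}$, so
$$h_{A(-l)}(t)=\sum_n \dim_\bbK A_{n-l}\,t^n=t^l h_A(t).$$
Since each $\beta_j$ is finite and $A$ is locally finite and connected, every term
$$F_j:=\bigoplus_{i=1}^{\beta_j}A(-l^j_i)$$
of the resolution is locally finite and left bounded. By additivity of dimension over finite direct sums,
$$h_{F_j}(t)=\Bigl(\sum_{i=1}^{\beta_j}t^{l^j_i}\Bigr)h_A(t).$$
All these series lie in $\Zint((t))$, so products with polynomials and finite sums of them make sense.

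Next, use that the resolution consists of graded homomorphisms of degree zero. Restricting it to a fixed degree $n$ gives a finite exact sequence of finite-dimensional $\bbK$-vector spaces:
$$0\to (F_d)_n\to\cdots\to (F_0)_n\to M_n\to 0.$$
The alternating sum of dimensions in such a sequence vanishes, by rank-nullity applied inductively. This gives
$$\dim_\bbK M_n=\sum_{j=0}^d(-1)^j\dim_\bbK (F_j)_n \quad\text{for every } n.$$
In particular $M$ is itself locally finite and left bounded, so $h_M(t)$ is defined. Multiplying by $t^n$ and summing over $n$ yields
$$h_M(t)=\sum_{j=0}^d(-1)^jh_{F_j}(t)=\Bigl(\sum_{j=0}^d(-1)^j\sum_{i=1}^{\beta_j}t^{l^j_i}\Bigr)h_A(t)=c_M(t)h_A(t).$$
Here the sign in the definition of $c_M(t)$ is read as $(-1)^j$, indexed by homological degree, which is clearly what is intended.

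There is no real obstacle. The only points needing care are that every Hilbert series involved is a genuine Laurent series, so that the termwise summation is legitimate, and that the resolution has finite length and finite ranks. Both hold by the finite free resolution hypothesis together with the standing assumption that $A$ is locally finite.
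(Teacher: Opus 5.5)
Your proof is correct, and it is the standard argument behind the cited result. The paper gives no proof of its own and simply quotes Stephenson--Zhang, whose lemma rests on exactly this additivity of Hilbert series along the degree-preserving exact resolution, combined with $h_{A(-l)}(t)=t^{l}h_A(t)$; you are also right that the sign $(-1)^i$ in the paper's definition of $c_M(t)$ is a typo for $(-1)^j$.
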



We close this subsection by recalling the definition of AS regular algebras, which are the main interest of the project:

\begin{de}\cite{RogalskiAS}
    Let $A=\bbK\oplus A_1\oplus A_2\oplus\cdots$ be a finitely generated graded algebra over $\bbK$. The algebra $A$ is called \emph{Artin-Schelter(AS) regular} if it has the following properties:
    \begin{enumerate}
        \item $A$ has finite global dimension $d$, 
        \item $A$ has finite GK-dimension, i.e., there exist $c, d>0$ such that $\dim_\bbK A_n\leq cn^d$ for all $n\geq1$.
        \item $A$ is Gorenstein, meaning that
        $$\ext^q_A(k, A)\simeq\left\{\begin{array}{llll}
        0 & \text{if}\ q\neq d\\
        \bbK & \text{if}\ q=d.
        \end{array}\right.$$
    \end{enumerate}
\end{de}

AS regular algebras are expected to satisfy a number of nice properties, some of which we list as follows:

\begin{conj}\cite{ArtinTateVandenBergh}, \cite{RogalskiAS}
Let $A$ be an AS regular algebra of global dimension $d$. Then the following should hold:
    \begin{enumerate}
        \item $A$ is Noetherian.
        \item $A$ is a domain.
        \item The GK dimension of $A$ equals $d.$
        \item(\cite{PositselskiPolishchuk}) $A$ has the Hilbert series of a weighted polynomial ring.
    \end{enumerate}
\end{conj}

Again, Corollary \ref{ASRcor} states that if $(3)$ and $(4)$ above are true, then the $3$-Koszul AS regular algebras of global dimension $3$ are the only $N$-Koszul AS regular algebras for $N\geq3$.

\subsection{Kabbaj's remark \cite[2.2]{Kabbaj} and the consequence of Theorem \ref{main}}\label{remarkmain}

In this subsection, we briefly recall how to obtain the equalities \ref{Kabbajeq1} and \ref{Kabbajeq2}, assuming the existence of an $N$-Koszul algebra satisfying the assumptions of \ref{Kabbajmain}. For a reference, see also \cite{Kabbaj}. After that, we prove Corollaries \ref{NKcor} and \ref{ASRcor}. \textbf{Throughout the paper, the integer $N$ will always be greater than $2.$} Let us recall the definition of $N$-Koszul algebras:

\begin{de}[$N$-Koszul algebras]\cite[Definition 2.10]{Berger}
    Let $A=\bbK\oplus A_1\oplus A_2\oplus\cdots$ be a locally finite graded algebra generated in degree 1 over $\bbK$. The algebra $A$ is called \emph{$N$-Koszul} if the trivial left $A$-module $\bbK$ has a minimal projective resolution of the form
\[\begin{tikzcd}[column sep=small]
	\cdots & {P^i} & \cdots & {P^1} & {P^0} & k & 0
	\arrow[from=1-1, to=1-2]
	\arrow[from=1-2, to=1-3]
	\arrow[from=1-3, to=1-4]
	\arrow[from=1-4, to=1-5]
	\arrow[from=1-5, to=1-6]
	\arrow[from=1-6, to=1-7]
\end{tikzcd}\]
    where each $P^i$ is generated in degree $\nu_i$ where
    $$\nu_i=\left\{\begin{array}{llll}
        \frac{i}{2}N & \text{if}\ i\ \text{is even}\\
        \frac{i-1}{2}N+1 & \text{if}\ i\ \text{is odd}.
        \end{array}\right.$$
\end{de}

Now suppose that $A$ is an $N$-Koszul algebra of finite global dimension $d$ such that
\begin{enumerate}
    \item the Hilbert series is that of a weighted polynomial ring, and
    \item the trivial $A$-module $\bbK$ has a finite free resolution.
\end{enumerate}
The Hilbert series hypothesis $(1)$ above and $c_\bbK(t)h_A(t)=h_\bbK(t)=1$ from Lemma \ref{2.3} imply that the characteristic polynomial $p(t)$ of the trivial module $\bbK$ must be of the form
$$p(t):=c_\bbK(t)=\prod_{i=1}^m(1-t^i)^{n_i},$$
which is Equation \ref{Kabbajeq2}.
On the other hand, the minimal projective resolution is of the form 
\[\begin{tikzcd}[column sep=small]
	0 & {\bigoplus^{\beta_d}_{i=1}A(-\nu_d)} & \cdots & {\bigoplus^{\beta_1}_{i=1}A(-\nu_1)} & A & \bbK & 0
	\arrow[from=1-1, to=1-2]
	\arrow[from=1-2, to=1-3]
	\arrow[from=1-3, to=1-4]
	\arrow[from=1-4, to=1-5]
	\arrow[from=1-5, to=1-6]
	\arrow[from=1-6, to=1-7]
\end{tikzcd}\]
where $\beta_0=1.$ Therefore we have
$$p(t):=c_\bbK(t)=\sum^d_{j=0}(-1)^j\left(\sum^{\beta_j}_{i=1}t^{\nu_i}\right)=\sum^d_{i=0}(-1)^i\beta_it^{\nu_i}.$$
Note that the degree of $p(t)$ is $\nu_d$. 

If $d$ were even, then the coefficient
of $t^{\nu_d}$ is $\beta_d$. On the other hand, we have $t^{\nu_d}p(1/t)=(-1)^{\nu_d}p(t)$ by Equation \ref{Kabbajeq2} and hence $1=(-1)^{\nu_d}\beta_d$ by comparison of the coefficient of $t^{\nu_d}$. Since $\beta_d>0$, $\nu_d$ must be even and $t^{\nu_d}p(1/t)=p(t)$. However, the comparison of the coefficient of $t^{\nu_1}=t$ yields $0=-\beta_1$ since $N\geq 3$. This contradicts $\beta_2>0$ and hence $d$ must be odd(this is the main idea of the proof of \cite[Proposition 3.1]{Kabbaj}). 

The coefficient of $t^{\nu_d}$ is then $-\beta_d$ and we obtain $1=(-1)^{\nu_d+1}\beta_d$. Therefore $t^{\nu_d}p(1/t)=-p(t)$. Writing $d=2k+1$, we have
$$p(t)=1-\beta_1t+\beta_2t^N-\cdots-\beta_{2k-1}t^{(k-1)N+1} +\beta_{2k}t^{kN}-\beta_{2k+1}t^{kN+1},$$
so that $\beta_{i}=\beta_{2k+1-i}$ for all $1\leq i\leq 2k+1$.
We may write $p(t)$ in the form
$$p(t)=1-\beta_1t+\beta_2t^N-\beta_3t^{N+1}+\cdots-\beta_2t^{(k-1)N+1}+\beta_1t^{kN} -t^{kN+1},$$
which is Equation \ref{Kabbajeq1}. If $d=1$, then we have $p(t)=1-t$. Since $h_A(t)=\frac{1}{p(t)}$, we obtain $\dim_\bbK A_i=1$ for all $i\geq 0$. This implies that $A$ is generated by a single element as an algebra over $\bbK$ and $\dim_\bbK A=\infty$. Thus $A\simeq\bbK[x]$, which is ($2$-)Koszul. Therefore $d>1$ and $k$ must be positive. 

Letting $$\alpha_0:=1, \alpha_i:=\beta_{2i}\ \text{for all}\ 0\leq i\leq k\ \text{and}\ q(t):=\sum^k_{i=0}\alpha_{i}t^{iN},$$ 
we obtain
$$p(t)=q(t)-t^{kN+1}q(1/t)$$
as we have seen just before Theorem \ref{main} in the introduction.

Now we are ready to prove Corollaries \ref{NKcor} and \ref{ASRcor}.

\begin{cor}\label{Ncor}\ref{NKcor}
    Let $N\geq 3$ and $A$ be an $N$-Koszul algebra of finite global dimension $d$ whose trivial module $\bbK$ has a finite free resolution.
    If the Hilbert series of $A$ is that of a weighted polynomial ring and the order of the pole of its Hilbert series $h_A(t)$ at $t=1$ is greater than $\frac{21d+1}{22}$, then $A$ is a $3$-Koszul AS regular algebra of global dimension $3$.
\end{cor}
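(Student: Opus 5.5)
This corollary should follow from Theorem \ref{main} once the algebra $A$ has been translated into the polynomial data of that theorem. The data come from the computation just carried out in Subsection \ref{remarkmain}.

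By Lemma \ref{2.3} and the Hilbert series hypothesis, $p(t)=c_\bbK(t)=1/h_A(t)=\prod_{i=1}^m(1-t^i)^{n_i}$, with $n_i\geq 0$. We may take $n_m>0$ after discarding trailing zero exponents. The order of the pole of $h_A(t)$ at $t=1$ is exactly $\sum_{i=1}^m n_i$, because each factor $1-t^i$ has a simple zero at $t=1$.

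The discussion above also shows the following:
\begin{itemize}
\item $d$ is odd, say $d=2k+1$;
\item $d=1$ forces $A\simeq\bbK[x]$, which is $2$-Koszul and not $N$-Koszul for $N\geq 3$, so $k\geq 1$;
\item $p(t)=q(t)-t^{kN+1}q(1/t)$ with $q(t)=\sum_{i=0}^k\alpha_i t^{iN}$, $\alpha_0=1$, $\alpha_i=\beta_{2i}>0$.
\end{itemize}
The $\beta_j$ are positive because the resolution is minimal and has length exactly $d$. Since all intermediate modules in a minimal resolution of length $d$ are nonzero, each $\beta_j\geq 1$.

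Next, the hypothesis on the pole translates directly. We are given $\sum n_i>\frac{21d+1}{22}=\frac{21(2k+1)+1}{22}=\frac{42k+22}{22}=\frac{21}{11}k+1$, which is exactly assumption $(2)$ of Theorem \ref{main}.

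The only remaining hypothesis of Theorem \ref{main} is that $N$ be an odd prime. Here I would invoke Theorem \ref{Kabbajmain}(3), which holds under exactly our assumptions and gives that $N$ is prime. Since $N\geq 3$, $N$ is odd.

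Theorem \ref{main} then yields $N=3$, $k=1$ (so $d=3$), $p(t)=(1-t)^2(1-t^2)$ and $q(t)=1+2t$. I note that $q(t)=1+2t$ looks inconsistent with $q(t)=1+\alpha_1t^{N}$ for $N=3$; I would read it as $1+2t^3$. The statement also says $A$ is $3$-Koszul of global dimension $3$.

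The last step is to upgrade this to AS regularity. This is the main obstacle, since Theorem \ref{main} only pins down numerical data. My plan is as follows:
\begin{itemize}
\item Use Dubois-Violette / Berger–Marconnet: for $N$-Koszul algebras of finite global dimension, the relation $t^{\nu_d}p(1/t)=-p(t)$ together with $\beta_d=1$ shows the Yoneda algebra $E(A)$ has one-dimensional top degree.
\item Argue that $E(A)$ is Frobenius, using the explicit length-$3$ resolution $0\to A(-4)\to A(-3)^2\to A(-1)^2\to A\to\bbK\to 0$ read off from $p(t)=1-2t+2t^3-t^4$. This makes $A$ AS Gorenstein by \cite[Theorem 1.2]{BergerMarconnet}.
\item Obtain the Frobenius property from the fact that a global dimension $3$ $N$-Koszul algebra with $\beta_3=1$ is a derivation-quotient algebra of a potential. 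Alternatively, invoke the classical Artin–Schelter argument that such a resolution of $\bbK$ is self-dual.
\item Finiteness of GK dimension follows from $h_A(t)=1/((1-t)^2(1-t^2))$, which has polynomial growth.
\end{itemize}
If the Frobenius step resists a clean argument, I would fall back on the Artin–Schelter classification framework in global dimension $3$, which handles exactly these resolution shapes.
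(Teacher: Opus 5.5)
Your reduction to Theorem \ref{main} is exactly the paper's proof: $N$ prime by Kabbaj, $k=\frac{d-1}{2}$, pole order $=\sum n_i$, and $\frac{21d+1}{22}=\frac{21}{11}k+1$, giving $N=3$ and $d=3$. You are also right that $q(t)$ should read $1+2t^3$. The paper's proof stops there and never argues AS regularity.

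One small repair. Your reason for $k\geq 1$ is wrong: $\bbK[x]$ \emph{is} $N$-Koszul for every $N$, since its resolution stops at $P^1=A(-1)$ and the degree condition is vacuous beyond $i=1$. The correct reason is that $d=1$ forces $p(t)=1-t$. Its pole order is $1=\frac{21\cdot 1+1}{22}$, which violates the strict inequality.

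The genuine gap is your final step, and it cannot be closed: the AS-regular part of the conclusion does not follow from the hypotheses. Take $A=\bbK\langle x,y\rangle/(x^2y,\,xy^2)$.
\begin{enumerate}
\item Its normal words are $y^a(xy)^bx^c$, so $h_A(t)=1/((1-t)^2(1-t^2))$.
\item $(R\otimes V)\cap(V\otimes R)$ is spanned by $w=x^2y^2=x\otimes xy^2$.
\item Anick's resolution, which is minimal for monomial algebras, is $0\to A(-4)\to A(-3)^2\to A(-1)^2\to A\to\bbK\to 0$, with last map $e_w\mapsto x\,e_{xy^2}$.
\end{enumerate}
So $A$ is $3$-Koszul of global dimension $3$, $\bbK$ has a finite free resolution, and the pole order $3$ exceeds $\frac{64}{22}$. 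Every hypothesis of the corollary holds.

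Yet dualizing the resolution gives $\ext^3_A(\bbK,A)\cong (A/xA)(4)$. This is infinite-dimensional, since $xA$ is spanned by the normal words beginning with $x$, so the classes of all $y^n$ survive. Hence $A$ is not Gorenstein.

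This example shows where each part of your plan fails:
\begin{enumerate}
\item A one-dimensional top degree $E^3$ does not make the Yoneda algebra Frobenius. Here $w$ has rank one in $V\otimes R$ (and in $R\otimes V$), so the pairing $E^1\times E^2\to E^3$ is degenerate.
\item Equivalently, $w$ is not a potential whose derivatives recover $R$: they span only $xy^2$.
\item Self-duality of the resolution is a consequence of the Gorenstein condition, not of the resolution's shape.
\item The Artin--Schelter classification presupposes the Gorenstein condition.
\end{enumerate}
What the argument (yours and the paper's) actually proves is $N=3$, $d=3$ and $h_A(t)=1/((1-t)^2(1-t^2))$. AS regularity has to be assumed, as it is in Corollary \ref{ASRcor}.
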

\begin{proof}
    Let $A$ be such an algebra and $p(t)$ be the characteristic polynomial of the trivial $A$-module $\bbK$. By \cite[Theorem 1.2(3)]{Kabbaj}, the integer $N$ is prime and there is a polynomial $q(t)$ satisfying the assumption of Theorem \ref{main} for $k=\frac{d-1}{2}$. The order of the pole of $h_A(t)$ at $t=1$ is equal to $\sum n_i$ and this is greater than $\frac{21d+1}{22}=\frac{21}{11}k+1$ by assumption and hence $p(t)$ satisfies conditions $(1)$ and $(2)$ of Theorem \ref{main}. It implies that $N=3$ and $k=1$ so that $d=3$. 
\end{proof}

In particular, we obtain the following result:

\begin{cor}\label{AScor}\ref{ASRcor}
    Let $N\geq 3$ and $A$ be an $N$-Koszul AS regular algebra over $\bbK$ of finite global dimension $d$. If the Hilbert series of $A$ is that of a weighted polynomial ring and $A$ has GK dimension $d$, then $A$ is a $3$-Koszul AS regular algebra of global dimension $3$. 
\end{cor}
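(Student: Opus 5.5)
This follows from Corollary \ref{NKcor}, so the plan is to verify its hypotheses for $A$. There are three: $N\geq 3$ and $A$ is $N$-Koszul of finite global dimension $d$; the trivial module $\bbK$ has a finite free resolution; and the order of the pole of $h_A(t)$ at $t=1$ exceeds $\frac{21d+1}{22}$. The weighted polynomial Hilbert series hypothesis is assumed directly, and the first requirement is part of the statement. The conclusion of Corollary \ref{NKcor} is then exactly the desired one.

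\textbf{Finite free resolution of $\bbK$.} Since $A$ is AS regular, it satisfies the Gorenstein condition and has finite global dimension. By \cite[Proposition 3.1]{StephensonZhang}, as recalled in the introduction, $\bbK$ therefore admits a finite free resolution. More concretely, the minimal resolution has length $d$, and each term $P^i$ is finitely generated. This holds because the ranks $\dim_\bbK\ext^i_A(\bbK,\bbK)$ are finite for an AS regular algebra (it is finitely generated and Gorenstein). So the characteristic polynomial $p(t)=c_\bbK(t)$ is defined, and Lemma \ref{2.3} gives $h_A(t)=1/p(t)$.

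\textbf{The pole order.} By \cite[Proposition 2.21]{ArtinTateVandenBergh} and \cite[Corollary 2.2]{StephensonZhang}, as recalled in Subsection \ref{prelim}, an AS regular algebra with the Hilbert series of a weighted polynomial ring has GK dimension equal to the order of the pole of $h_A(t)$ at $t=1$. That order is $\sum_i n_i$. By hypothesis the GK dimension is $d$, so the pole order is $d$. Since $d\geq 1$, we have $d>\frac{21d+1}{22}$, which is equivalent to $22d>21d+1$, i.e.\ $d>1$.

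The main obstacle is excluding $d=1$, where this inequality fails. If $d=1$, the discussion in Subsection \ref{remarkmain} shows $p(t)=1-t$ and hence $A\simeq\bbK[x]$. This algebra is $2$-Koszul, with its resolution generated in degrees $0$ and $1$. It is not $N$-Koszul for $N\geq3$ in the relevant sense: the odd-$d$ analysis there produces a form with $k\geq1$, which requires $d\geq 3$. So we may assume $d\geq 3$, and in particular $d>1$. Alternatively, if one prefers to keep $d=1$ as a degenerate case, it can be handled separately and discarded. With all hypotheses of Corollary \ref{NKcor} verified, it yields that $A$ is a $3$-Koszul AS regular algebra of global dimension $3$.
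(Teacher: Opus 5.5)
Your proposal follows the paper's proof exactly: \cite[Proposition 3.1]{StephensonZhang} gives the finite free resolution, the GK dimension $d$ equals the pole order, and $d>\frac{21d+1}{22}$ because $d>1$, so Corollary \ref{Ncor} applies. The one soft spot is $d=1$: the paper excludes it just as your first argument does ($d=1$ forces $A\simeq\bbK[x]$, which is $2$-Koszul), but your added justification via the odd-$d$ analysis is circular, since that analysis obtains $k\geq 1$ only from this very exclusion; moreover $d=1$ cannot be ``handled separately and discarded,'' because $\bbK[x]$ vacuously meets the resolution-degree definition of $N$-Koszul and is ruled out only by that convention.
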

\begin{proof}
    \cite[Proposition 3.1]{StephensonZhang} shows that the trivial $A$-module $\bbK$ has a finite free resolution. Also, the GK dimension $d$ of $A$ is greater than $\frac{21d+1}{22}$ since $d>1$. Thus the assertion follows from Corollary \ref{Ncor}. 
\end{proof}

\section{Proof of Theorem \ref{main}}

We break the proof of Theorem \ref{main} into five subsections: the first subsection is devoted to some computational results regarding the power sums of the roots of $p(t)$. This part only uses Assumption $(1)$ and does not use Assumption $(2)$. The results of this subsection will be used throughout the proof of Theorem \ref{main}. The second subsection uses Assumption $(2)$ to bound the integer $k$ of Theorem \ref{main} by using $N$ and $n_1$, which corresponds to the number of generators of $A$ in Kabbaj's formulation \cite[Remark 2.2]{Kabbaj}. The result of this subsection will be useful for the case where $m$ is relatively large compared to $N$, and it will be used in \ref{7m>2N-1imp} and \ref{53m>N-1}. When $N$ is sufficiently large and $m$ is relatively small (\ref{Na2}), we use Nagura's generalization of 
\emph{Bertrand's postulate} or
the \emph{Bertrand-Chebyshev theorem}, which is a well-known number theoretic fact on the distribution of primes conjectured by Bertrand and proved by Chebyshev.
\begin{thm}\cite{Nagura}
    Let 
    $$a_1:=2, a_2:=8, a_3:=9, a_4:=24, a_5:=25.$$
    For every integer $1\leq n\leq 5$ and every real number $x\geq a_n$, there exists a prime number $l$ such that
    $$x<l<\left(\frac{n+1}{n}\right)x.$$
\end{thm}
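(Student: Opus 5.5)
Since $\left(\frac{n+1}{n}\right)x\ge\frac{6}{5}x$ for every $1\le n\le 5$, it suffices to prove the single statement
\[
x\ge 25 \;\Longrightarrow\; \exists\ \text{prime } l \text{ with } x<l<\tfrac{6}{5}x .
\]
The remaining cases $n\le 4$ then reduce to the bounded ranges $a_n\le x<25$. Those ranges I would settle by a direct check. The plan has two parts: an analytic argument valid for large $x$, and a finite verification covering everything below the analytic threshold.

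For large $x$ I would work with Chebyshev's function $\theta(x)=\sum_{p\le x}\log p$. A prime exists in the half-open interval $(x,\frac65x]$ exactly when $\theta(\frac65x)-\theta(x)>0$. I would not use Chebyshev's classical binomial-coefficient method here. The constants it produces (roughly $0.92$ and $1.105$) have ratio essentially $6/5$, so they are too weak for the factor $\frac65$. Instead I would invoke the explicit Rosser--Schoenfeld type bounds:
\[
\theta(y)>y\Bigl(1-\frac{1}{\log y}\Bigr)\quad(y\ge 41),\qquad
\theta(y)<y\Bigl(1+\frac{1}{2\log y}\Bigr)\quad(y>1).
\]
These give
\[
\theta\!\left(\tfrac65x\right)-\theta(x) \;>\; x\Bigl(\tfrac15-\frac{6/5}{\log(6x/5)}-\frac{1}{2\log x}\Bigr),
\]
and the right-hand side is positive as soon as $\log x$ exceeds about $8.5$, that is, for $x\ge X_0$ with $X_0$ of order $5000$. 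To obtain a strict upper inequality $l<\frac65x$, I would run the same estimate with a slightly smaller factor, for example $\frac65-\varepsilon$; this only enlarges $X_0$ marginally.

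For $25\le x<X_0$ I would use a prime chain. I would exhibit primes $29=p_1<p_2<\dots<p_r$ with $p_r\ge\frac65X_0$ and $p_{i+1}<\frac65p_i$ for every $i$, and also check $29<\frac65\cdot 25$. Given $x$ in this range, let $p_{i+1}$ be the first prime of the chain that is strictly larger than $x$. Then $p_i\le x$ (or $x<p_1$ when $i=0$), so
\[
x<p_{i+1}<\tfrac65p_i\le\tfrac65x .
\]
Such a chain is easy to write down, since prime gaps below $10^4$ are tiny compared with $p/5$. For the small cases I would check by hand: for $n=1$ and $x\ge2$ this is Bertrand's postulate, and for $n=2,3,4$ I would verify $[a_n,25)$ with short lists such as $3,5,7,11,13,17,19,23,29$.

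The main obstacle is the analytic step. The $\frac65$ window is right at the limit of what elementary Chebyshev estimates can deliver, so the proof genuinely depends on sharper explicit bounds for $\theta$. If those cannot be cited, the fallback is Nagura's original approach: a refined Chebyshev-style factorial-ratio argument combined with explicit Stirling bounds. In that case the threshold $X_0$ would be larger, and the finite check would have to be extended accordingly.
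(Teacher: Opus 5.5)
Your argument is correct, but the paper contains no proof to compare it with. The paper imports this statement from Nagura as a black box and uses only one instance of it: the case $n=4$ at $x=2N+3\ge 25$, to find a prime in $(2N+3,3N-1]$ when $N\ge 11$. The case $N=7$ is settled there by the explicit prime $19$.

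What you add is an actual verification, and its pieces check out.
\begin{itemize}
\item Your reduction to the single case $n=5$, $x\ge 25$ is valid because $\frac{n+1}{n}\ge\frac65$.
\item The leftover ranges $[a_n,25)$ are indeed covered by $3,5,7,\dots,23,29$. For example, $29$ handles $[24,25)$ for $n=4$, since $\frac54\cdot 24=30$.
\item The Rosser--Schoenfeld estimate is computed correctly; the crossover is at $\log x\approx 8.4$.
\item To get the strict inequality, lower the factor to, say, $1.19$. Then one needs about $\log x\ge 8.8$.
\end{itemize}
Consequently the prime chain $29,31,37,43,47,53,61,73,83,97,\dots$, with each term less than $\frac65$ times its predecessor, must be written out up to roughly $7000$. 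That list is the one piece you still have to produce explicitly.

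On the trade-off: the citation is economical and credits the original source. Your route is self-contained modulo the explicit bounds of Rosser--Schoenfeld (1962). Those bounds postdate Nagura and rest on analytic and numerical information about the zeros of $\zeta$. So your proof is a clean modern verification rather than a more elementary one.

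Your route does make two things visible. A threshold near $25$ is forced, since the interval $(24,28.8)$ contains no prime. And your remark is accurate that Chebyshev's constants, whose ratio is exactly $\frac65$, cannot reach this window on their own.
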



Note that the classical Bertrand's postulate is the case $n=1$.

Before we start the proof, let us recall the setting of Theorem \ref{main}: we have a polynomial
$$q(t)=\sum^k_{i=0}\alpha_it^{iN}=\alpha_0+\alpha_1t^N+\cdots+\alpha_kt^{kN}$$
where $N$ is an odd prime, $k$ is a positive integer, $\alpha_0=1$ and $\alpha_1,\cdots,\alpha_k$ are positive integers. We assume that the polynomial $p(t)$ defined by
$$p(t):=q(t)-t^{kN+1}q(1/t)=\alpha_0-\alpha_kt+\alpha_1t^N+\cdots-\alpha_1t^{(k-1)N+1}+\alpha_kt^{kN}-\alpha_0t^{kN+1}$$
can be written as 
$$p(t)=\prod^m_{i=1}(1-t^i)^{n_i}$$
where $m, n_1, \cdots, n_m$ are nonnegative integers and $m, n_m>0$. Note that $n_1>0$ since $p(1)=0$. We already know that $n_{lN}=0$ for all $l\geq 1$ such that $1\leq lN\leq m$ by \cite[Proposition 3.3]{Kabbaj}. This fact can also be proved by using the equality $(1-\zeta)q(t)=p(\zeta t)-\zeta p(t)$ and $q(1)\neq0$ where $\zeta$ is the primitive $N$-th root of unity. 



\subsection{Computation of the sequence $\{\pi_i\}_{i\geq1}$}

In this subsection, we consider a sequence defined as follows:

\begin{de}\label{piseq}
For every $i\geq 1$, we set
$$\pi_i:=\sum_{1\leq d|i}dn_d,$$
where $d$ runs over all the positive divisors of $i$ and if $i>m$ we interpret $n_i=0$. 
\end{de}

This $\pi_i$ is in fact a power sum of the roots of $p(t)$ counted with multiplicity. Indeed,
$$\sum_{\zeta^d=1}\zeta^i=\sum^{d-1}_{j=0}\omega^{ij}=\left\{\begin{array}{llll}
d & \text{if $d$ divides $i$}\\
0 & \text{otherwise}
\end{array}\right.$$
where $\omega$ is a primitive $d$-th root of unity. Hence
$$\sum_{p(\zeta)=0}\zeta^i=\sum^m_{d=1}n_d\left(\sum_{\zeta^d=1}\zeta^i\right)=\sum_{1\leq d|i}n_d\cdot d=\pi_i.$$

To compute $\pi_i$, we first take the logarithm of the equality 
$$p(t)=\prod^m_{i=1}(1-t^i)^{n_i}.$$
to obtain the following equality of power series
$$\log p(t)=\sum^m_{i=1}n_i\log(1-t^i)=-\sum^m_{i=1}n_i\left(\sum_{j=1}^\infty\frac{t^{ij}}{j}\right)=-\sum^\infty_{n=1}\frac{\pi_n}{n}t^n.$$

On the other hand, the logarithmic formula \cite[3.5 Theorem A]{Comtet} shows that
$$\log p(t)=\sum^\infty_{i=1}\frac{L_n}{n!}t^n$$
where
$$L_n=\sum^n_{i=1}(-1)^{i-1}(i-1)!B_{n, i}(g_1, 2g_2, \cdots, (n-i+1)!g_{n-i+1}).$$
Here, $B_{n, i}$ is the (exponential) partial Bell polynomial(\cite[3.3, p.133]{Comtet}) defined by
$$B_{n, i}(x_1, x_2, \cdots, x_{n-i+1}):=\sum\frac{n!}{j_1!j_2!\cdots j_{n-i+1}!}\left(\frac{x_1}{1!}\right)^{j_1}\left(\frac{x_2}{2!}\right)^{j_2}\cdots\left(\frac{x_{n-i+1}}{(n-i+1)!}\right)^{j_{n-k+1}}$$
where $j_i$'s are subject to the conditions
$$j_1+j_2+\cdots+j_{n-i+1}=i,\ j_1+2j_2+\cdots+(n-i+1)j_{n-i+1}=n.$$
The $g_i$'s are the coefficients of $p(t)$ enumerated from the lowest degree to the highest:
$$g_i 
:=\left\{\begin{array}{llll}
0     &  \text{if}\ i\centernot\equiv 0, 1\ \text{or}\ i>kN+1\\
\alpha_l     & i\equiv 0, i=lN\ \text{for some}\ 0< l\leq k\\
-\alpha_{k-l} & i\equiv 1, i=lN+1\ \text{for some}\ 0\leq l\leq k
\end{array}\right.$$
for all $i\geq 1$. Therefore, we have
\begin{eqnarray*}
\pi_n&=&-\frac{L_n}{(n-1)!}\\
&=&\frac{-1}{(n-1)!}\sum^n_{i=1}(-1)^{i-1}(i-1)!B_{n, i}(g_1, 2g_2, \cdots, (n-i+1)!g_{n-i+1})\\
&=&n\sum^n_{i=1}\frac{(-1)^i}{i}
\hat{B}_{n, i}(g_1, g_2, \cdots, g_{n-i+1})
\end{eqnarray*}
where $\hat{B}_{n, i}$ is the ordinary Bell polynomial(\cite[p.136]{Comtet}) given by
$$\hat{B}_{n, i}(x_1, \cdots, x_{n-i+1}):=\sum\binom{i}{j_1, \cdots, j_{n-i+1}}x^{j_1}_1\cdots x^{j_{n-i+1}}_{n-i+1}$$
where $j_i$'s satisfy
$$j_1+j_2+\cdots+j_{n-i+1}=i,\ j_1+2j_2+\cdots+(n-i+1)j_{n-i+1}=n.$$
Since most of the $g_i$'s are $0$, we may write the ordinary Bell polynomials as follows:
$$\hat{B}_{n, i}(g_1, \cdots, g_{n-i+1})=\sum\binom{i}{j_1, j_N, j_{N+1}, \cdots}(-\alpha_k)^{j_1}\alpha^{j_N}_1(-\alpha_{k-1})^{j_{N+1}}\alpha_2^{j_{2N}}\cdots$$
where $j_i$'s satisfy
$$j_1+j_N+j_{N+1}+\cdots=i,\ j_1+Nj_N+(N+1)j_{N+1}+2Nj_{2N}+\cdots=n.$$
The equality on the right considerably reduces the number of possibilities of $j_i$'s when $n$ is small enough. Note that $\pi_n$ can be viewed as a polynomial in $\alpha_j$'s with rational coefficients. Exponential Bell polynomials and ordinary Bell polynomials are known to have  integer coefficients. To simplify the notation, We set $b_{n, i}:=\hat{B}_{n, i}(g_1, \cdots, g_{n-i+1})$ so that
$$\pi_n=n\sum^n_{i=1}\frac{(-1)^i}{i}
b_{n, i}=-nb_{n, 1}+\frac{n}{2}b_{n, 2}-\frac{n}{3}b_{n, 3}+\cdots+(-1)^nb_{n, n}$$

The aim of this subsection is to prove the following:

\begin{prop}\label{seqp}
Let $s:=n_1=\pi_1$.
\begin{enumerate}
    \item We have $\pi_n=s^n$ for all $1\leq n\leq N-1$.
    \item $\alpha_1=\frac{s^N-s}{N}$. In particular, $s>1$.
    \item $\pi_{N+l}=\frac{-ls^{N+l}+(N+l)s^{l+1}}{N}+(N+l)s^{l-1}\alpha_{k-1}$ for all $1\leq l\leq N-1$.
    \item $\pi_{N+l}\geq\frac{N-(l+1)}{2N-1}s^{N+l}$ for all $1\leq l\leq N-1$.
    \item Let $N>3$. For all $N+2<n\leq 2N-1$, $\pi_n$ is divisible by $s^2$.
    \item Let $N>3$. For all $2N+3<n\leq 3N-1$, $\pi_n$ is divisible by $s^2$.

    
\end{enumerate}
\end{prop}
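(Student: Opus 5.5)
Rather than manipulating the Bell-polynomial expression for $\pi_n$ directly, I would use the equivalent Newton-type recurrence. Since $\log p(t)=-\sum_{n\ge1}\frac{\pi_n}{n}t^n$, differentiating gives $-tp'(t)=p(t)\sum_{n\geq1}\pi_nt^n$. Comparing coefficients of $t^n$, with $g_0:=1$, yields
$$\pi_n=-ng_n-\sum_{j=1}^{n-1}g_j\pi_{n-j}.$$
Because $g_j\neq0$ only for $j\equiv0,1 \pmod N$, each recurrence involves only a few terms. Throughout I use $g_1=-\alpha_k$, $g_N=\alpha_1$ and $g_{N+1}=-\alpha_{k-1}$ (with $\alpha_0=1$ when $k=1$). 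I also use $\pi_N=n_1+Nn_N=s$, which follows from $n_N=0$ (\cite[Proposition 3.3]{Kabbaj}).

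For (1), the recurrence gives $\pi_1=\alpha_k$, so $s=\alpha_k$. For $2\le n\le N-1$ we have $g_n=0$, so $\pi_n=s\pi_{n-1}$, and induction gives $\pi_n=s^n$. For (2), the case $n=N$ reads $\pi_N=-N\alpha_1+s\cdot s^{N-1}$. Combined with $\pi_N=s$ this gives $\alpha_1=(s^N-s)/N$, and $\alpha_1>0$ forces $s>1$. For (3), the case $n=N+l$ with $1\le l\le N-1$ reads
$$\pi_{N+l}=-(N+l)g_{N+l}+s\pi_{N+l-1}-\alpha_1\pi_l+\alpha_{k-1}\pi_{l-1},$$
where the term $-(N+l)g_{N+l}=(N+l)\alpha_{k-1}$ appears only for $l=1$, and the term $\alpha_{k-1}\pi_{l-1}$ appears only for $l\ge2$. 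I would verify $l=1$ directly using $\pi_N=s$, and then induct on $l$ using (1) and (2). It is convenient to record the integral form
$$\pi_{N+l}=s^{l-1}\bigl(s^2-ls\alpha_1+(N+l)\alpha_{k-1}\bigr).$$

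For (4), the key observation is that every $\pi_n=\sum_{d\mid n}dn_d$ is nonnegative. Applying this to $l=N-1$ in (3) gives the lower bound
$$\alpha_{k-1}\ge\frac{(N-1)s^{N+1}-(2N-1)s^2}{N(2N-1)}.$$
Substituting this bound back into (3) for general $l$, the $s^{l+1}$ terms cancel, and the coefficient of $s^{N+l}$ simplifies to $\frac{(N+l)(N-1)-l(2N-1)}{N(2N-1)}=\frac{N-1-l}{2N-1}$, which is exactly (4). Part (5) then follows immediately from the integral form above: for $l\ge3$, that is $n>N+2$, the factor $s^{l-1}$ is divisible by $s^2$.

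For (6), with $n=2N+l$ and $3\le l\le N-1$, I would use the recurrence
$$\pi_{2N+l}=s\pi_{2N+l-1}-\alpha_1\pi_{N+l}+\alpha_{k-1}\pi_{N+l-1}-g_{2N}\pi_l-g_{2N+1}\pi_{l-1}.$$
No other $g_j$ contributes, since $2N+l<3N$; here $N>3$ guarantees the ranges are nonempty. The exact values of $g_{2N},g_{2N+1}$, whether $\alpha_2,-\alpha_{k-2}$ or the degenerate values when $k\le2$, are irrelevant, because $\pi_l=s^l$ and $\pi_{l-1}=s^{l-1}$. The terms $\pi_{N+l}$ and $\pi_{N+l-1}$ are divisible by $s^{l-1}$ and $s^{l-2}$ respectively by the integral form. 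For $l=3$ every term is divisible by $s$, so $s\mid\pi_{2N+3}$. For $l\ge4$, inducting on $l$, the term $s\pi_{2N+l-1}$ is divisible by $s^2$ because $s\mid\pi_{2N+l-1}$, and all other terms carry at least $s^{l-2}$, hence $s^2$. The main obstacle I expect is bookkeeping: correctly identifying which $g_j$ are nonzero in each range, especially for small $k$, where indices such as $kN$ and $kN+1$ collide with $N$, $N+1$, $2N$ and $2N+1$. I would check that each collision only changes coefficients that the argument never needs.
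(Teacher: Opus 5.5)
Your proof is correct, but it organizes the computation differently from the paper.

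\textbf{The paper's route.} The paper expands $\log p(t)$ with Comtet's logarithmic formula and writes each $\pi_n$ in closed form as $n\sum_i \frac{(-1)^i}{i}\hat{B}_{n,i}(g_1,\dots)$. It enumerates by hand the multi-indices $(j_1,j_N,j_{N+1},\dots)$ that can occur. Divisibility by $s^2$ in (5) and (6) is then read off from the exponent $j_1$ of $\alpha_k=s$ in each monomial. This needs a separate check that rational coefficients such as $(2N+l)(l-1)/2$ are integers.

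\textbf{Your route.} You use the Newton-type recurrence from $-tp'(t)=p(t)\sum_n\pi_nt^n$. The paper only mentions this recurrence in the Remark after the proof and never carries it out. Your argument for (4) is identical to the paper's: nonnegativity of $\pi_{2N-1}$ gives a lower bound on $\alpha_{k-1}$, which is then substituted back into (3).

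\textbf{What the recurrence buys.}
\begin{itemize}
\item Integrality is automatic, because the recurrence has integer coefficients. No case-by-case check of multinomial coefficients is needed.
\item The integral form $\pi_{N+l}=s^{l-1}\bigl(s^2-ls\alpha_1+(N+l)\alpha_{k-1}\bigr)$ makes (5) immediate.
\item The induction for (6) does not depend on the exact values of $g_{2N}$ and $g_{2N+1}$, or on index collisions when $k$ is small.
\item It cannot silently miss terms. Your $-g_{2N+1}\pi_{l-1}$ contribution corresponds to the multi-index $j_{2N+1}=1$, $j_1=l-1$. That multi-index is absent from the paper's table for $n=2N+l$. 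The omission is harmless, since that term has $j_1\geq 3$ and an integer coefficient.
\end{itemize}

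\textbf{What the paper's route buys.} It gives non-recursive closed formulas for $\pi_n$ as polynomials in the $\alpha_j$'s. This shows directly how each coefficient of $p(t)$ enters, without an induction.
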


\begin{proof}
$(1)$: Since $n<N$, $j_1=n$ and the other $j_l$'s are all $0$. Therefore $b_{n, i}=0$ if $i<N$. Hence  $\pi_n=(-1)^nb_{n, n}=(-1)^n(-\alpha_k)^n=s^n$.\\
$(2)$: If $n=N$, then $j_1=N$ or $j_N=1$ so that $b_{n, i}=0$ unless $i=1$ or $i=N$:
\begin{center}
\begin{tabular}{ |c|c|c|c|c|c|c|c| }
\hline
    $j_{N}$ & $j_1$ & $i$ & mult. coeff.\\
\hline
   $1$ &  & $1$ & $1$\\ 
\hline
      & $N$ & $N$ & $1$\\ 
\hline
\end{tabular}
\end{center}  
Therefore 
$$\pi_N=N\sum^N_{i=1}\frac{(-1)^i}{i}
b_{N, i}=N\left(-b_{N, 1}-\frac{b_{N, N}}{N}\right)=-N\alpha_1+\alpha_k^N=-N\alpha_1+s^N.$$
On the other hand, $\pi_N=n_1+Nn_N=n_1=s$ so that 
$$\alpha_1=\frac{s^N-s}{N}.$$
$s>1$ follows from the assumption that $\alpha_1>0$.\\

$(3)$: The following is the table for $n=N+l$ for every $1\leq l\leq N-1$:
\begin{center}
\begin{tabular}{ |c|c|c|c|c|c|c|c| }
\hline
  $j_{N+1}$ & $j_{N}$ & $j_1$ & $i$ & mult. coeff.\\
\hline
 $1$ & & $l-1$ & $l$ & $l$\\ 
\hline
    & $1$  & $l$ & $l+1$ & $l+1$\\ 
\hline
   &  & $N+l$ & $N+l$ & $1$\\ 
\hline
\end{tabular}
\end{center}  

Therefore 
\begin{eqnarray*}
    \pi_{N+l}&=&(N+l)\sum^{N+l}_{i=1}\frac{(-1)^i}{i}
b_{N+l, i}=(N+l)\left((-1)^lb_{N+l, l}+(-1)^{l+1}b_{N+l, l+1}+(-1)^{N+l}\frac{b_{N+l, N+l}}{N+l}\right)\\
    &=&(N+l)\left((-1)^l(-\alpha_k)^{l-1}(-\alpha_{k-1})+(-1)^{l+1}(-\alpha_k)^l\alpha_1+(-1)^{N+l}\frac{(-\alpha_k)^{N+l}}{N+l}\right)\\
    &=&(N+l)\alpha_k^{l-1}\alpha_{k-1}-(N+l)\alpha_k^l\alpha_1+\alpha_k^{N+l}\\
    &=&\frac{-(N+l)s^l(s^N-s)+Ns^{N+l}}{N}+(N+l)s^{l-1}\alpha_{k-1}\\
    &=&\frac{-ls^{N+l}+(N+l)s^{l+1}}{N}+(N+l)s^{l-1}\alpha_{k-1}\\
\end{eqnarray*}

$(4)$: Apply $l=N-1$ to $(3)$. Since $\pi_{2N-1}\geq0$, we obtain
$$(2N-1)s^{N-2}\alpha_{k-1}\geq\frac{(N-1)s^{2N-1}-(2N-1)s^N}{N}.$$
Therefore
$$\alpha_{k-1}\geq \frac{(N-1)s^{N+1}}{N(2N-1)}-\frac{s^2}{N}$$
therefore
\begin{eqnarray*}
m_{N+l}&=&\frac{-ls^{N+l}+(N+l)s^{l+1}}{N}+(N+l)s^{l-1}\alpha_{k-1}\\
&\geq& \frac{-ls^{N+l}+(N+l)s^{l+1}}{N}+(N+l)s^{l-1}\left(\frac{(N-1)s^{N+1}}{N(2N-1)}-\frac{s^2}{N}\right)\\
&=&\frac{(N+l)(N-1)-l(2N-1)}{N(2N-1)}s^{N+l}=\frac{N^2-(l+1)N}{N(2N-1)}s^{N+l}\\
&=&\frac{N-(l+1)}{2N-1}s^{N+l}
\end{eqnarray*}

$(5)$: Recall from $(3)$ that we have the following table for $n=N+l$ for every $1\leq l\leq N-1$:
\begin{center}
\begin{tabular}{ |c|c|c|c|c|c|c|c| }
\hline
  $j_{N+1}$ & $j_{N}$ & $j_1$ & $i$ & mult. coeff.\\
\hline
 $1$ & & $l-1$ & $l$ & $l$\\ 
\hline
    & $1$  & $l$ & $l+1$ & $l+1$\\ 
\hline
   &  & $N+l$ & $N+l$ & $1$\\ 
\hline
\end{tabular}
\end{center}  

It is clear that the coefficient of $\alpha_j$'s in each term of $\pi_{2N+l}$ viewed as a polynomial in $\alpha_j$'s is an integer. If $l>2$, then $j_1\geq2$ for each term of $\pi_n$. Thus $\pi_n$ is divisible by $\alpha_k^2=s^2$ if $2<l\leq N-1$.

$(6)$: The following is the table for $n=2N+l$ for some $2\leq l\leq N-1$:
\begin{center}
\begin{tabular}{ |c|c|c|c|c|c|c|c| }
\hline
 $j_{2N}$ & $j_{N+1}$ & $j_{N}$ & $j_1$ & $k$ & mult. coeff.\\
\hline
$1$ & & & $l$ & $l+1$ & $l+1$\\ 
\hline
 & $2$ & & $l-2$ & $l$ & $\binom{l}{2}$\\ 
\hline
  & $1$ & $1$ & $l-1$ & $l+1$ & $l(l+1)$\\ 
\hline
  & $1$ & & $N+l-1$ & $N+l$ & $N+l$\\ 
\hline
  & & $2$ &  $l$ & $l+2$ & $\binom{l+2}{2}$\\ 
\hline
  & & $1$ & $N+l$ & $N+l+1$ & $N+l+1$\\ 
\hline
  & &  & $2N+l$ & $2N+l$ & $1$\\ 
\hline
\end{tabular}
\end{center}  

It is clear that the coefficients of $\alpha_j$'s in the terms of $\pi_{2N+l}$ corresponding to $k=l+1, N+l, N+l+1 2N+l$ are all integers. If $k=l$, then the coefficient is $(2N+l)(l-1)/2$, and this is an integer since one of the two factors of the numerator is always even. If $k=l+2$, the coefficient is $(2N+l)(l+1)/2$ and this is also an integer. Therefore, $\pi_{n}$ can be viewed as a polynomial over $\alpha_j$'s with integer coefficients. If $l>3$, then $j_1\geq 2$ for each term of $\pi_n$. This means that every term of $\pi_n$ is divisible by $\alpha_k^2=s^2$ if $3<l\leq N-1$.  \end{proof}

\begin{rem}
    It is also possible to compute $\pi_i$ by using Taylor series. First, take the logarithm of both sides of 
$$p(t)=\prod^m_{i=1}(1-t^i)^{n_i}.$$
to obtain
$$\log p(t)=\sum^m_{i=1}n_i\log(1-t^i).$$
By taking the derivatives of both sides, we have
$$\frac{p'(t)}{p(t)}=\sum^m_{i=1} n_i\cdot\frac{-it^{i-1}}{1-t^i}$$
Therefore, we obtain
$$\sum^m_{i=1}\frac{in_it^i}{1-t^i}=-\frac{tp'(t)}{p(t)}.$$
Since $n_i=0$ for all $i>m$, we may view the left-hand side as a Lambert series:
$$\sum^\infty_{i=1}\frac{in_it^i}{1-t^i}=-\frac{tp'(t)}{p(t)}.$$

We have
$$\sum^\infty_{i=1}\pi_it^i=-\frac{tp'(t)}{p(t)}$$
and hence we may compute $\pi_i$ by comparing the coefficients of $p(t)(\sum^\infty_{i=1}\pi_it^i)$ and those of $-tp'(t)$.
\end{rem}

\subsection{An upper bound of $k$ in terms of $n_i$'s for the case $m\geq N+1$}

The goal of this subsection is to bound $k$ by using $N$ and $n_i$'s where $1\leq i\leq N-1$ when $m\geq N+1$. By $(1)$ of Proposition \ref{seqp}, we know that $\pi_i=s^i$ for $1\leq i\leq N-1$ so that $n_i$ can be expressed using the powers of $s^i$. It further enables us to bound $k$ using $N$ and $s$.

We start with the polynomial
$$p(t)=\prod^m_{i=1}(1-t^i)^{n_i}$$
where $m\geq1$ and $n_1, \cdots, n_m$ are positive integers. Note that $n_{lN}=0$ for all $l\geq 1$ such that $1\leq lN\leq m$ by \cite[Proposition 3.3]{Kabbaj}. Therefore, it can be written as a product
$$p(t)=(1-t)^{n_1}(1-t^2)^{n_2}\cdots(1-t^{N-1})^{n_{N-1}}(1-t^{N+1})^{n_{N+1}}\cdots.$$
Consider the polynomial
$$\frac{p(t)}{\prod_{i=1}^{N-1}(1-t^i)^{n_i}}=\prod_{i=N+1}^m(1-t^i)^{n_i}\ \ \ \ \ \ \ (*).$$
The degree of this polynomial is given by 
$$kN+1-\sum^{N-1}_{i=1}in_i.$$
Since $m\geq N+1$, the number of the factors $1-t^i$ that appear on the right-hand side of $(*)$ can be written as 
$$\sum^{m}_{i=N+1}n_i=\sum^{m}_{i=1}n_i-\sum^{N-1}_{i=1}n_i$$
which is greater than 
$$\frac{21}{11}k+1-\sum^{N-1}_{i=1}n_i$$
by assumption $(2)$ of Proposition \ref{main}. Since $1-t^i$ has degree at least $N+1$ when $i\geq N+1$, we have 
$$(N+1)\left(\frac{21}{11}k+1-\sum^{N-1}_{i=1}n_i\right)< kN+1-\sum^{N-1}_{i=1}in_i.$$
By moving all the terms without $k$ on the left-hand side to the right-hand side, we obtain
$$\left(\frac{10}{11}N+\frac{21}{11}\right)k< (N+1)\sum^{N-1}_{i=1}n_i-\sum^{N-1}_{i=1}in_i-N=\sum^{N-1}_{i=1}(N+1-i)n_i-N.$$
Thus we have proved the following:
\begin{lem}\label{lockout}
    Suppose that $m\geq N+1$. Then we have 
$$\left(\frac{10}{11}N+\frac{21}{11}\right)k<\sum^{N-1}_{i=1}(N+1-i)n_i-N.$$
   In particular,
$$k<\frac{11}{10}\sum^{N-1}_{i=1}\frac{N+1-i}{N}n_i-\frac{11}{10}\leq\frac{11}{10}\sum^{N-1}_{i=1}n_i.$$
\end{lem}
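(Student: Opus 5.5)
The first inequality of Lemma \ref{lockout} is essentially the computation carried out just before the statement, so the plan is to make that computation rigorous. The second chain of inequalities is then a rearrangement.

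\textbf{Step 1: the quotient $(*)$.} Since $n_{lN}=0$ by \cite[Proposition 3.3]{Kabbaj}, every factor $(1-t^i)$ with $i\geq N+1$ in the quotient $(*)$ has degree at least $N+1$. The quotient has degree $kN+1-\sum_{i=1}^{N-1}in_i$, because $\deg p=kN+1$. Comparing degrees therefore gives
$$(N+1)\sum_{i=N+1}^m n_i\leq kN+1-\sum_{i=1}^{N-1}in_i.$$

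\textbf{Step 2: use Assumption $(2)$.} Assumption $(2)$ of Theorem \ref{main} gives
$$\sum_{i=N+1}^m n_i=\sum_{i=1}^m n_i-\sum_{i=1}^{N-1}n_i>\frac{21}{11}k+1-\sum_{i=1}^{N-1}n_i.$$
Multiplying by $N+1>0$ and combining with Step 1 yields the strict inequality
$$(N+1)\Bigl(\frac{21}{11}k+1-\sum_{i=1}^{N-1} n_i\Bigr)<kN+1-\sum_{i=1}^{N-1}in_i.$$
The hypothesis $m\geq N+1$ is only needed so that this sum is the number of factors of $(*)$. Even if it is empty the inequality still holds, since its left side is nonpositive.

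\textbf{Step 3: rearrange.} Moving the $k$-terms to the left and everything else to the right, and using $(N+1)\frac{21}{11}-N=\frac{10}{11}N+\frac{21}{11}$, gives
$$\Bigl(\tfrac{10}{11}N+\tfrac{21}{11}\Bigr)k<\sum_{i=1}^{N-1}(N+1-i)n_i-N,$$
which is the first claim.

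\textbf{Step 4: the second claim.} Since $\frac{10}{11}N+\frac{21}{11}>\frac{10}{11}N$, we have $\frac{10}{11}Nk<\sum_{i=1}^{N-1}(N+1-i)n_i-N$. Dividing by $\frac{10}{11}N$ gives
$$k<\frac{11}{10}\sum_{i=1}^{N-1}\frac{N+1-i}{N}n_i-\frac{11}{10}.$$
Finally, $\frac{N+1-i}{N}\leq 1$ for $i\geq1$ and $n_i\geq 0$, so $\frac{11}{10}\sum_{i=1}^{N-1}\frac{N+1-i}{N}n_i\leq\frac{11}{10}\sum_{i=1}^{N-1}n_i$. Dropping $-\frac{11}{10}$ then gives the final bound.

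\textbf{Main obstacle.} There is no real obstacle here. The only point needing care is to keep strictness at the right place: it comes from Assumption $(2)$, while the degree count gives only a non-strict inequality. One must also use that $N+1>0$ when multiplying through.
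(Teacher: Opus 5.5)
Your proposal is correct and is essentially the paper's own argument. Like the paper, you compare the degree $kN+1-\sum_{i=1}^{N-1}in_i$ of the quotient $(*)$ with $(N+1)$ times its number of factors, insert Assumption $(2)$ for the strict inequality, and rearrange; your Step 4 spells out the ``in particular'' part (using $k>0$ and $\frac{N+1-i}{N}\le 1$), which the paper leaves implicit.
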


\subsection{$N\geq7$ is impossible}

The goal of this subsection is to show that $N\geq 7$ is impossible and hence $N=5$ or $N=3$.

\subsubsection{The impossibility of the case $m\geq 2N-1$}\label{7m>2N-1imp}


\begin{cor}\label{7m<2N-1}
Let $N\geq 7$, $q(t)$ and $p(t)$ be polynomials as in Theorem \ref{main}. Then $m<2N-1$.
\end{cor}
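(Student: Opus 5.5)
The plan is to play the upper bound on $k$ from Lemma \ref{lockout} against a lower bound on $k$ coming from the degree of $p(t)$. Suppose, for contradiction, that $m\geq 2N-1$. Then in particular $m\geq N+1$, so Lemma \ref{lockout} applies and gives
$$k<\frac{11}{10}\sum_{i=1}^{N-1}\frac{N+1-i}{N}n_i-\frac{11}{10}.$$
By Proposition \ref{seqp}(1), $\pi_i=s^i$ for $1\leq i\leq N-1$, and Möbius inversion of $\pi_i=\sum_{d\mid i}dn_d$ gives $n_i=\frac1i\sum_{d\mid i}\mu(i/d)s^d\leq s^i/i$. Hence the right-hand side is at most roughly $\frac{11}{10}\cdot\frac{2}{N-1}s^{N-1}\cdot\frac{s}{s-1}$ and, using $s\geq2$ from Proposition \ref{seqp}(2), at most a modest constant times $s^{N-1}/N$. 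This gives an upper bound of the shape $k\leq C\,s^{N-1}/N$ with an explicit $C$.

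For the lower bound I would use the degree. Since every factor $(1-t^i)^{n_i}$ contributes degree $in_i$, we have $kN+1=\sum_i in_i\geq jn_j$ for every $j$. I will pick $j=N+l$ for a suitable $1\leq l\leq N-2$; the natural first try is $l=N-2$, i.e.\ $j=2N-2$. By Proposition \ref{seqp}(4),
$$\pi_{N+l}\geq\frac{N-(l+1)}{2N-1}s^{N+l}.$$
The proper divisors $d$ of $N+l$ satisfy $d\leq (N+l)/2<N$, except that $d=N$ cannot occur since $N$ is prime and $N\nmid N+l$. So $\sum_{d\mid N+l,\,d<N+l}dn_d=\sum_{d}\pi$-type terms is bounded by $\sum_{d\leq (N+l)/2}s^d\leq 2s^{(N+l)/2}$. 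Therefore
$$(N+l)n_{N+l}\geq \frac{N-(l+1)}{2N-1}s^{N+l}-2s^{(N+l)/2},$$
which yields $kN\gtrsim \frac{N-l-1}{2N-1}s^{N+l}$. Note that this does not even need $n_{N+l}$ to be among the "large" factors; $n_{N+l}$ is simply whatever the factorization forces.

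Comparing the two bounds gives an inequality of the form
$$\frac{N-l-1}{N(2N-1)}s^{N+l}-\text{(lower order)}<\frac{11}{10}\sum_{i=1}^{N-1}\frac{N+1-i}{N}n_i,$$
whose left side carries an extra factor of order $s^{l+1}$ compared with the right. With $s\geq2$ and $N\geq7$, this should fail. The step I expect to be the main obstacle is making this final numerical comparison airtight in the worst case $s=2$, $N=7$, where the margins are small; this is presumably where the paper's inequality $(**)$ and the constant $\frac{21}{11}$ enter. There I would optimize the choice of $l$, balancing the factor $(N-l-1)/(2N-1)$ against $s^{N+l}$. I would also use the sharper form of Lemma \ref{lockout}, keeping the weights $N+1-i$ and the $-N$ term. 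Finally, I would exploit the hypothesis $m\geq 2N-1$ to add to the degree count the extra contribution $(N-2)$ coming from a factor $1-t^m$ of degree at least $2N-1$ rather than $N+1$. If some small pair $(N,s)$ still escaped, I would treat it by direct substitution into Proposition \ref{seqp}(2),(3).
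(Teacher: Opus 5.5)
Your plan follows the paper's proof. Lemma~\ref{lockout}, together with an estimate of $n_i$ ($i\le N-1$) obtained from $\pi_i=s^i$, bounds $k$ above in terms of $s^{N-1}$. Proposition~\ref{seqp}(4) with $l=N-2$ gives the lower bound $kN+1\ge\pi_{2N-2}\ge s^{2N-2}/(2N-1)$. (The paper's text writes $\pi_{2N-1}$, but the bound it quotes is this $l=N-2$ case.)

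Three simplifications are available:
\begin{itemize}
\item $\pi_{2N-2}$ is already a sub-sum of $\sum_i in_i$. So you do not need to isolate $n_{2N-2}$ and subtract the proper-divisor terms.
\item $in_i\le\pi_i=s^i$ follows at once from $n_d\ge 0$, without M\"obius inversion.
\item The extra degree contributed by the factor $1-t^m$ is not needed.
\end{itemize}

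The step you leave as ``should fail'' is where care is needed, and your closed-form estimate is too lossy for it. It drops the $\frac1N$ in the weights $\frac{N+1-i}{N}$: the top term is $\frac2N n_{N-1}\le\frac{2s^{N-1}}{N(N-1)}$. At $N=7$, $s=2$ your estimate only gives $7k+1\lesssim 330$, against $\pi_{12}\ge 2^{12}/13>315$, so there is no contradiction.

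Summing exactly does close this case. You get $7k<\frac{11}{10}\sum_{i=1}^{6}(8-i)\frac{2^i}{i}-\frac{77}{10}<98$, which contradicts even your weaker bound $12n_{12}\ge 315-128$. For $N\ge 11$ or $s\ge 3$, crude estimates already suffice.

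The paper organizes this the other way round. It uses the cruder $n_i\le 2s^i$ to get $s^{N-1}\le\frac{22}{5}N(2N-1)$, which forces $N=7$, $s=2$. It then finishes that case with the exact values $(n_1,\dots,n_6)=(2,1,2,3,6,9)$ and the weighted form of Lemma~\ref{lockout}. If you use your sharper bound $in_i\le s^i$ and sum it exactly, the case split disappears.
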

\begin{proof}
It suffices to show that the case $m\geq 2N-1$ is impossible. By Definition \ref{piseq} and the Mobius inversion formula, we may bound $n_i$ from above as follows:
$$n_i=\frac{1}{i}\sum_{d|i}\mu\left(\frac{i}{d}\right)\pi_{d}\leq\frac{1}{i}\sum_{d|i} s^d\leq\frac{1}{i}\sum_{d=1}^is^d=\frac{s}{i}\cdot\frac{s^i-1}{s-1}\leq \frac{2}{i}s^i\leq 2s^i.$$
where we use the inequalities $s\geq2$ and $\frac{s}{2}\leq s-1$. 
By the second equality of Lemma \ref{lockout} we have
\begin{eqnarray*}
    kN+1&<&1+\frac{11}{10}N\sum^{N-1}_{i=1}n_i\leq1+\frac{11}{5}N\sum^{N-1}_{i=1}s^i\\
    &=&1+\frac{11}{5}N\cdot\frac{s}{s-1}(s^{N-1}-1))\leq1+\frac{22}{5}N(s^{N-1}-1))\leq\frac{22}{5}Ns^{N-1}
\end{eqnarray*}
On the other hand, since $m\geq 2N-1$, we have
$$kN+1=\sum_{i=1}^min_i\geq\sum_{i|2N-1}in_i=\pi_{2N-1}\geq\frac{s^{2N-2}}{2N-1}$$
by $(4)$ of Proposition \ref{seqp} for $l=N-2$.
Thus we obtain
$$\frac{s^{2N-2}}{2N-1}\leq \frac{22}{5}Ns^{N-1}.$$
By multiplying by $(2N-1)/s^{N-1}$, we have
$$s^{N-1}\leq \frac{22}{5}N(2N-1).$$
In particular
$$2^{N-1}\leq \frac{22}{5}N(2N-1)\ \ \ \ \ (**).$$
Note that
$$\frac{\frac{22}{5}(N+1)(2N+1)}{\frac{22}{5}N(2N-1)}=\frac{2N^2+3N+1}{2N^2-N}=1+\frac{4N+1}{2N^2-N}$$
and this is less than $2$ when $N>4$. Since $2^{11-1}=1024>1016.4=22\cdot 11\cdot (22-1)/5$, the inequality $(**)$ holds only when $N<10$. By assumption, $N$ is a prime greater than or equal to $7$ so that $N=7$. $\sqrt[6]{22\cdot 7\cdot 13/5}=2.71...<3$ thus $s=2$. Recall that
$$n_1=s, n_2=\frac{s^2-s}{2}, n_3=\frac{s^3-s}{3}, n_4=\frac{s^4-s^2}{4}, n_5=\frac{s^5-s}{5}, n_6=\frac{s^6-s^3-s^2+s}{6}$$
if $N=7$. The values of $n_i$'s and those of $in_i$'s for $s=2$ are the following: 

\begin{center}
\begin{tabular}{ |c|c|c|c|c|c|c|c| }
\hline
 $i$ & $1$ & $2$ & $3$ & $4$ & $5$ & $6$ \\
\hline
$n_i$ & $2$ & $1$ & $2$ & $3$ & $6$ & $9$ \\
\hline
$in_i$ & $2$ & $2$ & $6$ & $12$ & $30$ & $54$ \\ 
\hline
\end{tabular}
\end{center}

The table and Lemma \ref{lockout} show that
$$\frac{91}{11}k\leq 7n_1+6n_2+5n_3+4n_4+3n_5+2n_6-7=14+6+10+12+18+18-7=71$$
and hence $k\leq 7$.  However, $54=6n_6<\sum_{i=1}^m in_i=7k+1\leq 50$, which is a contradiction. 

Thus we conclude that $m<2N-1$ if $N\geq 7$.
\end{proof}

\subsubsection{The impossibility of the case $m<2N-1$}\label{Na2}

Recall the following improvement of Bertrand's postulate by Nagura:

\begin{thm}\cite{Nagura}
    Let 
    $$a_1:=2, a_2:=8, a_3:=9, a_4:=24, a_5:=25.$$
    For every integer $1\leq n\leq 5$ and every real number $x\geq a_n$, there exists a prime number $l$ such that
    $$x<l<\left(\frac{n+1}{n}\right)x.$$
\end{thm}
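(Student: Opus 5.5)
The statement is Nagura's theorem, which is cited from \cite{Nagura} and used as a black box. If I had to prove it, my plan would be the following.

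\textbf{Step 1: reduce to $n=5$.} For $1\leq n\leq 5$ we have $\frac{n+1}{n}\geq\frac{6}{5}$, so the open interval $(x,\frac{6}{5}x)$ is contained in $(x,\frac{n+1}{n}x)$. Hence the case $n=5$ settles every $n$ for all $x\geq 25$. It remains to treat $a_n\leq x<25$ for $n\leq 4$. This is a finite check on the primes
$$2,3,5,7,11,13,17,19,23,29.$$
For real $x$ in a half-open interval $[p,p')$ between consecutive primes $p<p'$, the worst case is $x=p$. The condition there is $p'<\frac{n+1}{n}p$, i.e.\ $np'<(n+1)p$, for every such consecutive pair with $p'>a_n$. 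I would go through the pairs one by one. For example, $(7,11)$ fails for $n=2$, which is why $a_2=8$. The pair $(23,29)$ forces $a_4=24$, since $29>\frac{5}{4}\cdot 23$.

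\textbf{Step 2: the analytic range for $n=5$.} Let $\theta(x)=\sum_{p\leq x}\log p$ and $\psi(x)=\sum_{p^a\leq x}\log p$. It suffices to prove $\theta(\frac{6}{5}x)-\theta(x)>0$ for all $x\geq X_0$, with some explicit $X_0$. I would follow Chebyshev's method.
\begin{enumerate}
\item Use the identity $\log\lfloor x\rfloor!=\sum_{m\geq1}\psi(x/m)$.
\item Form a weighted combination such as
$$T(x)-T(x/2)-T(x/3)-T(x/5)+T(x/30),\qquad T(x)=\log\lfloor x\rfloor!.$$
Stirling's formula evaluates this combination as $Ax+O(\log x)$ with $A\approx0.921$. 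The same combination is squeezed between $\psi(x)$ and $\psi(x)-\psi(x/6)$.
\item Iterating the upper estimate gives explicit bounds $ax-c\log^2x\leq\psi(x)\leq bx+c\log^2x$, with $b/a$ close to $6/5$.
\item Pass to $\theta$ via $\psi(x)-\theta(x)\leq\psi(\sqrt x)+\psi(x^{1/3})\log_2 x$.
\end{enumerate}

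\textbf{Main obstacle.} Chebyshev's weights only give $b/a=6/5$, which is exactly the borderline and not quite enough for ratio $6/5$. So I would instead input sharper explicit estimates of Rosser--Schoenfeld type, $|\theta(x)-x|<\varepsilon x$ with a small $\varepsilon$ for $x$ beyond an explicit point. Nagura obtains such bounds by using the Riemann zeta zero-free region numerically. Getting the constants tight enough that $X_0$ is of manageable size is the delicate part. Once that is done, I would need $\frac{6}{5}(1-\varepsilon)>1+\varepsilon$ plus a lower-order margin, which holds for $\varepsilon<1/11$.

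\textbf{Step 3: the finite range.} For $25\leq x<X_0$, I would exhibit an explicit chain of primes $29,31,37,41,43,47,53,\dots$ extending past $X_0$, in which each prime is less than $\frac{6}{5}$ times its predecessor, and start it from $23$. Since $29<\frac{6}{5}\cdot 25$ and the chain extends past $X_0$, every $x\in[25,X_0)$ lies in some $[p,p')$ with $p'$ in the chain and $p'<\frac{6}{5}p\leq\frac{6}{5}x$. This completes the proof.
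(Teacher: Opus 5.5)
The paper gives no proof of this statement. It quotes Nagura's theorem and uses it as a black box, exactly as your first sentence does, so on that level you match the paper. Your optional sketch goes further, and it is a sound outline.

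Your Step 1 makes explicit something the paper leaves implicit. The table of $a_n$ is just Nagura's original result (a prime in $(x,\frac65x)$ for $x\ge 25$) plus a finite check below $25$. Moreover, the paper only ever applies the theorem at $x=2N+3\ge 25$: with $n=4$ in the proof of Corollary~\ref{2N-1} and $n=5$ in the remark after it. So only Nagura's $x\ge 25$ statement is actually needed.

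In Step 2 your diagnosis is right: Chebyshev's weights give exactly $b/a=\frac65$, and $\varepsilon<\frac1{11}$ is the correct threshold. The plan also closes concretely. The Rosser--Schoenfeld bounds $|\theta(x)-x|<\frac{x}{2\log x}$ for $x\ge 563$ give $\varepsilon\le\frac{1}{2\log 563}\approx 0.079<\frac1{11}$. So $X_0=563$ works, and the prime chain in Step 3 is a short hand check. Be aware that this trades Nagura's citation for another deep explicit estimate, so it is no more self-contained than the paper's citation.

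Three small slips should be fixed.

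First, in Step 1 the worst case on $[p,p')\cap[a_n,25)$ is $x=\max(p,a_n)$, not $x=p$. As literally stated, your criterion $np'<(n+1)p$ fails for the pair $(7,11)$ when $n=2,3$, and for $(23,29)$ when $n=4$. The theorem holds there only because the range starts at $a_n$: $11<\frac32\cdot 8$, $11<\frac43\cdot 9$ and $29<\frac54\cdot 24$.

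Second, $\theta(\frac65x)-\theta(x)>0$ only yields a prime $l\le\frac65x$. For the strict inequality, compare $\theta((\frac65-\eta)x)$ with $\theta(x)$ for a small $\eta>0$; the strict inequality $\varepsilon<\frac1{11}$ leaves room for this.

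Third, the chain cannot start at $23$ under your ratio condition, since $29>\frac65\cdot 23$. Your separate observation $29<\frac65\cdot 25$ is what covers $[25,29)$. The chain proper should start at $29$, and its tightest link is then $37<\frac65\cdot 31$.
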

\begin{cor}\label{2N-1}
    Let $q(t)$ and $p(t)$ be polynomials as in Theorem \ref{main}. Then  $N=5$ or $N=3$.
\end{cor}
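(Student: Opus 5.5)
The plan is to combine Corollary \ref{7m<2N-1} with a prime-counting argument in the window $(N,2N-1)$. By that corollary, if $N\geq 7$ we may assume $m<2N-1$, and we must derive a contradiction. The key observation is the following. For a prime $l$ with $N<l<2N-1$, the only divisors of $l$ are $1$ and $l$, so
$$\pi_l=n_1+l\,n_l=s+l\,n_l .$$
On the other hand, writing $l=N+j$ with $1\leq j\leq N-2$, part $(4)$ of Proposition \ref{seqp} gives
$$\pi_l\geq \frac{N-(j+1)}{2N-1}s^{N+j}\geq \frac{s^{l}}{2N-1}.$$

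\textbf{Case $l>m$.} Then $n_l=0$, so $\pi_l=s$. But $s\geq 2$ and $l>N\geq 7$ give $s^{l}/(2N-1)>s$, a contradiction.

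\textbf{Case $l\leq m$.} Then $m\geq l\geq N+1$, so Lemma \ref{lockout} applies. Exactly as in the proof of Corollary \ref{7m<2N-1}, the bounds $n_i\leq 2s^i$ and Lemma \ref{lockout} give $kN+1<\frac{22}{5}Ns^{N-1}$. Also
$$kN+1=\sum_i in_i\geq \pi_l\geq \frac{N-(j+1)}{2N-1}s^{N+j}.$$
Combining these yields
$$s^{j+1}\leq \frac{22}{5}\cdot\frac{N(2N-1)}{N-j-1}.$$

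The aim is therefore to choose $l$ so that $j=l-N$ is a fixed fraction of $N$ while $N-j-1$ stays bounded below. Two choices of Nagura's theorem are natural.

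\begin{itemize}
\item Take $n=2$ and $x$ slightly less than $\frac{4}{3}N$, say $x=\frac{4N-4}{3}$. This gives a prime $l$ with $\frac{4N-4}{3}<l<2N-2$, so $j\geq \frac{N-1}{3}$ and $N-j-1\geq 1$. We get $2^{(N+2)/3}\leq \frac{22}{5}N(2N-1)$, which fails for all sufficiently large $N$.
\item Alternatively, use a prime $l$ in $(\frac{3}{2}N,\frac{7}{4}N)$ from $n=3$, or a similar window. This keeps $N-j-1\geq N/4$, so the denominator helps, and the inequality becomes roughly $2^{N/2}\lesssim \frac{88}{5}(2N-1)$.
\end{itemize}

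The main obstacle is the finite list of primes $N\geq 7$ below the threshold where the asymptotic inequality takes over (roughly $N\leq 40$), and also the requirement $x\geq a_n$ in Nagura's theorem. For these I would argue by hand: pick an explicit prime $l\in(N,2N-1)$ with $l-N$ as large as possible subject to $N-j-1\geq 1$ (for example $l=13$ for $N=7$, $l=19$ for $N=11$). This forces $s$ to be small, typically $s=2$. Then I would redo the sharper computation of Corollary \ref{7m<2N-1}: compute $n_1,\dots,n_{N-1}$ exactly from $\pi_i=s^i$ via Möbius inversion, bound $k$ through the first inequality of Lemma \ref{lockout}, and compare with $kN+1\geq \pi_l$ or $kN+1\geq (N-1)n_{N-1}$. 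Doing this over all $N$ in the small range concludes that $N\geq 7$ is impossible, hence $N\in\{3,5\}$.
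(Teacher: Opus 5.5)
Your argument is correct in outline, but it takes a genuinely different route from the paper's. The paper never uses primes in $(N,2N-1)$. Instead it takes a prime $l$ with $2N+3<l\le 3N-1$ (Nagura with $n=4$, $x=2N+3$ for $N\ge 11$, and $l=19$ for $N=7$). It gets $\pi_l=n_1=s$ from $m<2N-1$ (Corollary \ref{7m<2N-1}), and it gets $s^2\mid\pi_l$ from the divisibility computation in Proposition \ref{seqp} for $2N+3<n\le 3N-1$. Hence $s=1$, contradicting $s>1$. This is a two-line contradiction with no case split and no numerical estimates, but it depends on Corollary \ref{7m<2N-1} and on the Bell-polynomial divisibility.

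Your route uses only the positivity bound Proposition \ref{seqp}(4), Lemma \ref{lockout} and the crude estimate $kN+1<\frac{22}{5}Ns^{N-1}$. Notice that it never actually uses $m<2N-1$: the case $l>m$ fails because $s^{l-1}\le 2N-1$ is impossible, and the case $l\le m$ only needs $m\ge N+1$. So it makes Corollary \ref{7m<2N-1} unnecessary. The price is the case split, an exponential-versus-quadratic estimate that only takes over for $N$ beyond about $40$, and a finite check of the primes below that.

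The finite check needs one correction. For $N=7$, $l=13=2N-1$ is outside your window and gives $N-j-1=0$, so (4) is vacuous there. Use $l=11$ instead, which gives $s^5<200.2$, hence $s=2$. Then $n_1,\dots,n_6=2,1,2,3,6,9$, and the first inequality of Lemma \ref{lockout} gives $k\le 8$, so $7k+1\le 57$. This contradicts $7k+1\ge\pi_{11}\ge\frac{2}{13}\cdot 2^{11}$. Your alternative bound $(N-1)n_{N-1}=54$ would not suffice here.

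For every prime $11\le N\le 37$, the largest prime $l\le 2N-3$ already violates $s^{j+1}<\frac{22}{5}\cdot\frac{N(2N-1)}{N-j-1}$ at $s=2$. The tightest case is $N=11$, $l=19$, where $2^9=512>508.2$. So $N=7$ is the only case that needs exact values.

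Two minor points: from $l>\frac{4N-4}{3}$ you only get $j>\frac{N-4}{3}$, and Nagura with $n=3$ does not produce a prime in $(\frac32N,\frac74N)$. Neither affects the argument, since your first bullet suffices.
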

\begin{proof}
First suppose that $N\geq 11$. By Corollary \ref{7m<2N-1}, we know that $m<2N-1$. We apply the above theorem for $n=4$ and $x=2N+3$: the theorem asserts that for every real number $N \geq 11$, there exists a prime number $l$ such that
$$2N+3<l<\frac{5}{4}(2N+3).$$
Note that 
$$3N-1-\frac{5}{4}(2N+3)=\frac{2N-19}{4}\geq\frac{22-19}{3}=1>0.$$
Thus there exists a prime $l$ such that $$2N+3<l\leq3N-1.$$
Then Proposition \ref{seqp}(5) implies that $\pi_l$ is divisible by $s^2$. On the other hand, $l>2N+3>2N-1>m$ so
$$\pi_l=n_1+ln_l=n_1=s.$$
Therefore $s^2$ divides $s$ so that $s=1$, which contradicts $(2)$ of Proposition \ref{seqp}. Therefore $N=7$. Note that $m<2N-1$ still holds by Corollary \ref{7m<2N-1}.

Then we may apply the same argument for the prime $19$, which is between $2N+3=17$ and $3N-1=20$, to say that
$\pi_{19}=s$. Again, this $\pi_{19}$ must be divisible by $s^2$ so that $s=1$, which is a contradiction. Therefore, the assumption that $N\geq 7$ is false and we conclude that $N=5$ or $N=3$ since $N$ must be an odd prime less than $7$. 
\end{proof}

\begin{rem}
We may also take $n=5$ in the above argument. In this case,
$$3N-1-\frac{6}{5}(2N+3)=\frac{3N-23}{5}\geq\frac{33-23}{3}=\frac{10}{3}>0.$$
Therefore there exists a prime $l$ such that $$2N+3<l\leq3N-1.$$
\end{rem}


\subsection{If $N=5$ or $N=3$, then $m\leq N-1$}\label{53m>N-1}

By using the result from the previous subsection, we have eliminated the possibility of the case $N\geq7$. Since $N$ is an odd prime, it suffices to consider $N=3$ and $N=5$.

\begin{cor}\label{not35}
    Let $q(t)$ and $p(t)$ be polynomials as in Proposition \ref{main}. If $N=5$ or $N=3$, then $m\leq N-1$.
\end{cor}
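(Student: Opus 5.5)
The plan is to argue by contradiction. Since $n_N=0$ by \cite[Proposition 3.3]{Kabbaj}, the condition $m>N-1$ forces $m\geq N+1$, so Lemma \ref{lockout} applies. The idea is to combine two bounds. Lemma \ref{lockout} bounds $k$ \emph{from above} by a polynomial in $s$ of degree about $N-1$. A power sum bound from Proposition \ref{seqp} bounds $k$ \emph{from below} by a polynomial in $s$ of degree $N+1$. Together these force $s$ to be tiny, and the remaining cases are checked by hand.

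\textbf{Step 1 (lower bound).} Each $\pi_n=\sum_{d\mid n} d n_d$ is at most $\sum_{d=1}^m d n_d=\deg p=kN+1$, for every $n$, without any assumption on $m$. Applying Proposition \ref{seqp}(4) with $l=1$ gives
$$kN+1\geq \pi_{N+1}\geq \frac{N-2}{2N-1}s^{N+1}.$$

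\textbf{Step 2 (upper bound).} For $1\leq i\leq N-1$ we have $\pi_i=s^i$ by Proposition \ref{seqp}(1). M\"obius inversion then expresses each $n_i$ as an explicit polynomial in $s$:
\begin{itemize}
\item for $N=3$: $n_1=s$ and $n_2=(s^2-s)/2$;
\item for $N=5$: $n_1=s$, $n_2=(s^2-s)/2$, $n_3=(s^3-s)/3$ and $n_4=(s^4-s^2)/4$.
\end{itemize}
Substituting these into the first inequality of Lemma \ref{lockout} gives an explicit bound $k<F_N(s)$. For $N=3$ this reads
$$\tfrac{51}{11}k<3n_1+2n_2-3=s^2+2s-3 .$$
For $N=5$ we get $\tfrac{71}{11}k<6n_1+5n_2+4n_3+3n_4-5$, a quartic in $s$.

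\textbf{Step 3 (comparison).} Combining the two bounds gives
$$\frac{N-2}{2N-1}s^{N+1}-1<N F_N(s).$$
The left side has degree $N+1$ and the right side degree $N-1$, so only small $s$ survive; recall $s\geq 2$ by Proposition \ref{seqp}(2).
\begin{itemize}
\item For $N=3$: at $s=3$ the left side is $81/5-1=15.2$, while the right side is $3\cdot\frac{11}{51}\cdot 12\approx 7.8$. The inequality fails there and for all larger $s$, so $s=2$. Then $k<\frac{11\cdot 5}{51}$, hence $k=1$ and $\deg p=4$. But $n_1+2n_2=2+2=4$ already uses the full degree, leaving no room for a factor $1-t^i$ with $i\geq N+1$. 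This contradicts $m\geq N+1$.
\item For $N=5$: the same comparison should leave only $s=2$ (and possibly $s=3$). For each surviving $s$, compute $k$'s admissible range from Lemma \ref{lockout}. Then check that $\sum_{i\leq 4} i n_i+(N+1)\leq kN+1$ fails. If it does not fail directly, use the exact values of $\pi_6,\dots,\pi_9$ from Proposition \ref{seqp}(3), which must be consistent with nonnegative integers $n_6,n_7,n_8,n_9$, to rule the case out.
\end{itemize}

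\textbf{Main obstacle.} I expect the $N=5$, $s=2$ case to be the hardest. There $n_1,\dots,n_4=2,1,2,3$, so $\sum_{i\le 4} i n_i=2+2+6+12=22$. The two bounds may leave a few values of $k$ open, so the crude degree count may not suffice on its own. I would first try the degree count together with the strict inequality $(\tfrac{71}{11})k<\sum(N+1-i)n_i-N=12+5+8+9-5=29$, which gives $k\leq 4$ and hence $\deg p\leq 21<22$. That is already a contradiction without needing the $\pi_{N+l}$ data, so this case may in fact close easily. The remaining effort is to verify that $s\geq 3$ is genuinely excluded for $N=5$ by Step 3.
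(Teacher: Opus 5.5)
Your argument is correct, but it takes a more roundabout route than the paper. The paper does not use Proposition \ref{seqp}(3)--(4) here at all. It gets the lower bound on $k$ from the plain degree count $kN+1>\sum_{i=1}^{N-1}in_i$, which is strict because some $n_i$ with $i\geq N+1$ is positive. This gives $3k+1>s^2$ for $N=3$ and $5k+1>s^4+s^3-s$ for $N=5$, which the paper compares with Lemma \ref{lockout} (weakened to $5k<\cdots$). Both sides are then of degree $N-1$ in $s$, but the leading coefficients are in the right order. One gets $2(s-1)(s-2)<0$, resp.\ $(s-1)(s-2)(s^2+3s+4)<0$, which no integer satisfies, so there is no case split.

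Your Step 1 bound $kN+1\geq\pi_{N+1}\geq\frac{N-2}{2N-1}s^{N+1}$ is valid and disposes of $s\geq 3$. However, your treatment of the leftover case $s=2$ is exactly the paper's degree count, and run for general $s$ that count makes Step 1 unnecessary. What your route buys is that the lower bound does not depend on $m\geq N+1$. The derivation of Lemma \ref{lockout} also goes through verbatim when $m\leq N-1$. So for $N=5$ your Steps 1--3, with correct coefficients, exclude every $s$ and rule out $N=5$ outright. This avoids first proving $m\leq 4$ and doing the computation in Theorem \ref{N+1neq5}.

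One slip: for $N=5$ the coefficients in Lemma \ref{lockout} are $N+1-i$, so the bound reads $\frac{71}{11}k<5n_1+4n_2+3n_3+2n_4-5$, not $6n_1+5n_2+4n_3+3n_4-5$. Your version is a weaker consequence (as $n_i\geq 0$), so nothing breaks. With the correct bound, $s=2$ gives $k\leq 3$, and Step 3 alone already fails at $s=2$ ($\frac{61}{3}$ versus $\frac{55}{71}\cdot 21\approx 16.3$). Likewise $s=3$ fails Step 3 outright ($242$ versus less than $90$), so the hedged fallback to $\pi_6,\dots,\pi_9$ is never needed.
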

\begin{proof}
Suppose toward contradiction that $m\geq N+1$. If $N=5$, then the first equality of Lemma \ref{lockout} implies that
\begin{eqnarray*}
    5k&<&\frac{71}{11}k< 5n_1+4n_2+3n_3+2n_4-5=5s+2(s^2-s)+s^3-s+\frac{s^4-s^2}{2}-5\\
    &=&\frac{1}{2}s^4+s^3+\frac{3}{2}s^2+2s-5
\end{eqnarray*}
so
$$5k+1<\frac{1}{2}s^4+s^3+\frac{3}{2}s^2+2s-4.$$
On the other hand,
$$5k+1=\sum^m_{i=1}in_i> n_1+2n_2+3n_3+4n_4=s+s^2-s+s^3-s+s^4-s^2=s^4+s^3-s$$
so that
$$s^4+s^3-s<5k+1<\frac{1}{2}s^4+s^3+\frac{3}{2}s^2+2s-4.$$
Therefore 
$$s^4-3s^2-6s+8=(s-1)(s-2)(s^2+3s+4)=(s-1)(s-2)\left(\left(s+\frac{3}{2}\right)^2+\frac{7}{4}\right)<0.$$
However, there is no integer $s$ satisfying the inequality. Therefore $N=3$.

Again, the first equality of Lemma \ref{lockout} yields
$$5k<\frac{51}{11}k< 3n_1+2n_2-3=3s+s^2-s-3=s^2+2s-3$$
so
$$k<\frac{s^2+2s-3}{5}.$$
On the other hand,
$$3k+1=\sum^m_{i=1}in_i> n_1+2n_2=s+s^2-s=s^2.$$
since $n_m>0$ with $m\geq4$. Therefore 
$$\frac{s^2-1}{3}<k.$$
Hence $5(s^2-1)<3(s^2+2s-3)$ so that
$$2s^2-6s+4=2(s^2-3s+2)=2(s-1)(s-2)<0$$
This is a contradiction since there is no integer $s$ satisfying this inequality. Thus $m<N+1$. By definition, $m$ is the largest positive integer such that $n_m>0$. We know that $n_N=0$ so that $m\leq N-1$. 
\end{proof}


\subsection{Proof that $N=3$ and $p(t)=(1-t)^2(1-t^2)$}

The following finally proves Theorem \ref{main}.

\begin{thm}\label{N+1neq5}
Let $q(t)$ and $p(t)$ be polynomials as in Theorem \ref{main}. Then $N=3$, $k=1$ and $p(t)=(1-t)^2(1-t^2)$.
\end{thm}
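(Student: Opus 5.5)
By Corollary \ref{2N-1} we have $N\in\{3,5\}$, and by Corollary \ref{not35} we have $m\leq N-1$. So the plan is to use that $p(t)$ is then completely determined by $s=n_1$, and to finish by elementary inequalities.

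First, since $m\leq N-1$, Proposition \ref{seqp}(1) gives $\pi_i=s^i$ for all $1\leq i\leq m$. By Möbius inversion every $n_i$ with $i\leq N-1$ is a fixed polynomial in $s$:
\begin{itemize}
\item for $N=3$: $n_1=s$ and $n_2=\frac{s^2-s}{2}$;
\item for $N=5$: additionally $n_3=\frac{s^3-s}{3}$ and $n_4=\frac{s^4-s^2}{4}$.
\end{itemize}
Comparing degrees in $p(t)=\prod_{i\leq N-1}(1-t^i)^{n_i}$ gives
$$kN+1=\sum_{i\leq N-1} in_i=\sum_{i\leq N-1}\pi_i-(\text{correction}),$$
and concretely:
\begin{itemize}
\item $3k+1=s+2\cdot\frac{s^2-s}{2}=s^2$ when $N=3$;
\item $5k+1=s+(s^2-s)+(s^3-s)+(s^4-s^2)=s^4+s^3-s$ when $N=5$.
\end{itemize}
Thus $k$ is expressed in terms of $s$, and assumption (2) of Theorem \ref{main}, $\sum n_i>\frac{21}{11}k+1$, becomes a polynomial inequality in the single variable $s$. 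Proposition \ref{seqp}(2) gives $s\geq 2$.

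\textbf{Case $N=5$.} Here $\sum n_i=s+\frac{s^2-s}{2}+\frac{s^3-s}{3}+\frac{s^4-s^2}{4}$, which is about $\frac{s^4}{4}$. The bound $\frac{21}{11}k+1=\frac{21}{55}(s^4+s^3-s-1)+1$ has leading term $\frac{21}{55}s^4>\frac14 s^4$. After clearing denominators (multiply by $660$), the inequality becomes a quartic in $s$ with negative leading coefficient. I will check that it fails for every integer $s\geq 2$: it suffices to verify a few small values by hand and bound the rest by the leading term.

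As a backup, the integrality requirement $s^4+s^3-s\equiv 1\pmod 5$ already rules out some values; for example, at $s=2$ it gives $k=21/5$. This step, getting the $N=5$ inequality cleanly for all $s$, is the only place with real computation and is where I expect any difficulty.

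\textbf{Case $N=3$.} Assumption (2) reads $\frac{s^2+s}{2}>\frac{21}{11}\cdot\frac{s^2-1}{3}+1$. Multiplying through gives $3s^2-11s+8<0$, i.e. $(3s-8)(s-1)<0$. Hence $1<s<\frac{8}{3}$, so $s=2$. Then:
\begin{itemize}
\item $k=\frac{s^2-1}{3}=1$;
\item $n_1=2$ and $n_2=1$, so $p(t)=(1-t)^2(1-t^2)$;
\item Proposition \ref{seqp}(2) gives $\alpha_1=\frac{8-2}{3}=2$, so $q(t)=1+2t^3$.
\end{itemize}
Expanding confirms $q(t)-t^4q(1/t)=1-2t+2t^3-t^4=(1-t)^2(1-t^2)$, consistent with the statement.
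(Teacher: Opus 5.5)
Your proof is correct, but it finishes differently from the paper. Once $N\in\{3,5\}$ and $m\le N-1$ are known, the paper does not use hypothesis (2) again. Instead it:
\begin{itemize}
\item applies Proposition \ref{seqp}(3) for $l=1,2$;
\item observes that $n_{N+1}=n_{N+2}=0$ turns $\pi_{N+1}$ and $\pi_{N+2}$ into explicit polynomials in $s$;
\item eliminates the unknown $\alpha_{k-1}$ to obtain an exact equation in $s$.
\end{itemize}
For $N=3$ this equation is $3(s^2-1)(s^2-4)=0$. For $N=5$ its only positive integer roots are $1$ and $2$, and $s=2$ is then excluded because $\deg p=22\not\equiv 1 \pmod 5$.

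You instead use only the degree identity $kN+1=\sum_{i\le N-1} in_i$ and feed the resulting $k$ back into (2). The $N=5$ check you deferred does work. The condition becomes $87s^4+32s^3-165s^2-362s+408<0$, and the left side factors as $(s-1)(87s^3+119s^2-46s-408)$. The cubic factor equals $672$ at $s=2$ and is increasing, so no $s\ge 2$ qualifies. Your congruence backup alone would not suffice, since $s\equiv 1,4 \pmod 5$ pass it.

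Your route is shorter. It needs only Proposition \ref{seqp}(1) rather than the Bell-polynomial formula for $\pi_{N+l}$, and it gives $s>1$ for free. The paper's route has the merit that its last step depends only on $m\le N-1$, not on the constant $\frac{21}{11}$. This matters for the paper's closing question about dropping (2): without (2), your degree identity yields only congruences such as $3\nmid s$ for $N=3$.

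Incidentally, your $q(t)=1+2t^3$ is right; the paper's $1+2t$ is a slip.
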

\begin{proof}
First, suppose $N=5$. Corollary \ref{not35} implies that $m\leq 4$. In particular, we have $n_6=n_7=0$. Proposition \ref{seqp}(3) for $l=1$ yields
$$\pi_6=\frac{-s^6+6s^2}{5}+6\alpha_{k-1}.$$
On the other hand, by $n_6=0$ and Proposition \ref{seqp}(1), we have
$$\pi_6=n_1+2n_2+3n_3=\pi_2+\pi_3-n_1=s^2+s^3-s$$
so 
$$6\alpha_{k-1}=\frac{s^6-6s^2}{5}+\pi_6=\frac{s^6-6s^2}{5}+s^2+s^3-s=\frac{s^6-6s^2+5s^2+5s^3-5s}{5}=\frac{s^6+5s^3-s^2-5s}{5}.$$

Similarly, Proposition \ref{seqp}(3) for $l=2$ yields
$$\pi_7=\frac{-2s^7+7s^3}{5}+7s\alpha_{k-1}.$$
On the other hand, by $n_7=0$ and Proposition \ref{seqp}(1), we have $\pi_7=n_1=s$
so
$$7\alpha_{k-1}=\frac{1}{s}\left(\frac{2s^7-7s^3}{5}+s\right)=\frac{2s^6-7s^2+5}{5}$$
Therefore
$$7s^6+35s^3-7s^2-35s=7(s^6+5s^3-s^2-5s)=6(2s^6-7s^2+5)=12s^6-42s^2+30$$
so
$$5s^6-35s^3-35s^2+35s+30=0.$$
The left-hand side is factorized as
$$35(s^6-7s^3-7s^2+7s+6)=35(s-1)(s+1)(s-2)(s^3+2s^2+5s+3).$$
Since $s$ is a positive integer, $s=1$ or $s=2$. The former leads to a contradiction to $(2)$ of Proposition \ref{seqp}. Suppose that $s=2$. Then 
$$n_1=\pi_1=2,\ n_2=\frac{\pi_2-n_1}{2}=\frac{s^2-s}{2}=\frac{4-2}{2}=1,\ n_3=\frac{\pi_3-n_1}{3}=\frac{s^3-s}{3}=\frac{8-2}{3}=2$$
and
$$n_4=\frac{\pi_4-\pi_2}{4}=\frac{s^4-s^2}{4}=\frac{16-4}{4}=3.$$
Therefore 
$$\deg p(t)=\sum^4_{i=1}in_i=1\cdot 2+2\cdot 1+3\cdot 2+4\cdot 3=22,$$
which contradicts that $\deg p(x)=5k+1$ for some positive integer $k$.

Therefore $N=3$. Again, Corollary \ref{not35} shows that $m\geq 2$ and in particular $n_4=n_5=0$. Proposition \ref{seqp}(3) for $l=1$ implies that
$$\pi_4=\frac{-s^4+4s^2}{3}+4\alpha_{k-1}.$$
On the other hand, $n_4=0$ so that $\pi_4=n_1+2n_2=\pi_2=s^2$. Therefore we obtain
$$4\alpha_{k-1}=\frac{s^4-4s^2}{3}+s^2=\frac{s^4-4s^2+3s^2}{3}=\frac{s^4-s^2}{3}.$$

Furthermore, Proposition \ref{seqp}(4) for $l=2$ implies that
$$\pi_5=\frac{-2s^5+5s^3}{3}+5s\alpha_{k-1}.$$
Again, $n_5=0$ so that $\pi_5=n_1=\pi_1=s>0$. Therefore
we obtain
$$5\alpha_{k-1}=\frac{1}{s}\left(\frac{2s^5-5s^3}{3}+s\right)=\frac{2s^4-5s^2+3}{3}.$$
Hence
$$5s^4-5s^2=5(s^4-s^2)=4(2s^4-5s^2+3)=8s^4-20s^2+12.$$
so
$$3s^4-15s^2+12=0.$$
The left-hand side can be factorized as
$$3s^4-15s^2+12=3(s^2-4)(s^2-1).$$
Thus $s$ must be either $1$ or $2$. 
Again, the former leads to a contradiction $(2)$ of Proposition \ref{seqp}.

Therefore $s=2$. We have
$$n_1=\pi_1=2\ \text{and}\ n_2=\frac{\pi_2-n_1}{2}=\frac{s^2-s}{2}=\frac{4-2}{2}=1.$$ 
Thus $p(t)=(1-t)^2(1-t^2)$. Also, $3k+1=\deg p(t)=4$ so $k=1$ and $q(t)=1+\alpha_1t=1+st=1+2t$. 
\end{proof}


\textbf{Acknowledgments.} The author thanks Abdourrahmane Kabbaj for helpful conversations. In addition, the author would like to thank Manuel L. Reyes for his feedback and advice. 

\bibliographystyle{plain}
\bibliography{soresearch}
\end{document}